\documentclass[journal]{IEEEtran}

\usepackage{amsmath,amsfonts,amsthm,amssymb,mathrsfs}
\usepackage{enumitem}
\usepackage{array}
\usepackage[caption=false,font=normalsize,labelfont=sf,textfont=sf]{subfig}
\usepackage{textcomp}
\usepackage{stfloats}
\usepackage{url}
\usepackage{verbatim}
\usepackage{graphicx}
\usepackage{cite}
\usepackage{cancel}

\usepackage{algorithm,setspace,soul}
\usepackage{algpseudocode}
\usepackage{breqn}
\usepackage{nicefrac}
\usepackage{bbm,xspace}

\usepackage{balance}

\usepackage{tikz}
\usetikzlibrary{fadings}
\usetikzlibrary{patterns}
\usetikzlibrary{shadows.blur}
\usetikzlibrary{shapes}

\usepackage{booktabs}
\usepackage{multirow}

\newcommand{%
     \scalebox{}{\input{}}  
}[2]{%
     \scalebox{#1}{\input{#2}}  
}

\newcommand{\etal}			{{et al.\ }}
\newcommand{\bpara}[1]		{\smallskip \noindent {\bf #1}}

\usepackage[scaled=0.87]{cascadia-code}

\def\ompfbp				{{{\texttt{OMP-FBP}}}\xspace}
\def\ompdfr			    {{{\texttt{OMP-DFR}}}\xspace}
\def\ompnfft			{{{\texttt{OMP-NFFT}}}\xspace}
\def\usfbp  			{{{\texttt{US-FBP}}}\xspace}
\def\madc			    {{$\mathscr{M}_\lambda$--{\texttt{ADC}}}\xspace}

\newcommand{\secref}[1]		{Section~\ref{#1}}
\newcommand{\fig}[1]		{Fig.~\ref{#1}}

\DeclareFontFamily{U}{mathx}{}
\DeclareFontShape{U}{mathx}{m}{n}{<-> mathx10}{}
\DeclareSymbolFont{mathx}{U}{mathx}{m}{n}
\DeclareMathAccent{\widehat}{0}{mathx}{"70}
\DeclareMathAccent{\widecheck}{0}{mathx}{"71}

\newtheorem{definition}{Definition}
\newtheorem{proposition}{Proposition}
\newtheorem{theorem}{Theorem}
\newtheorem{corollary}{Corollary}
\newtheorem{lemma}{Lemma}

\algrenewcommand\algorithmicrequire{\textbf{Input:}}
\algrenewcommand\algorithmicensure{\textbf{Output:}}


\def\N			{\mathbb N}
\def\Z			{\mathbb Z}
\def\R			{\mathbb R}
\def\C			{\mathbb C}
\def\Sphere		{\mathbb{S}^1}

\def\Radon		{\mathcal R}

\def\Fourier	{\mathcal F}

\def\Mod		{\mathscr M}
\def\Modulo		{\mathscr R}
\def\Sum		{\mathsf S}
\def\ind		{\mathbbm 1}

\def\Lebesgue	{\mathrm L}

\def\Band		{\mathcal B}

\def\d			{\mathrm d}
\def\e			{\mathrm e}
\def\i			{\mathrm i}
\def\T			{\mathrm T}

\def\bfx		{\mathbf{x}}

\def\bfz		{\mathbf{z}}
\def\bftheta	{\boldsymbol{\theta}}
\def\bfomega	{\boldsymbol{\omega}}
\def\bfV		{\mathbf{V}}
\def\bfc		{\mathbf{c}}
\def\bfs		{\mathbf{s}}

\def\Ell		{\mathbb L}
\def\E			{\mathbb E}
\def\M			{\mathcal M}
\def\S			{\mathcal S}

\def\O			{\mathcal O}

\def\OF			{\mathsf{OF}}
\def\complement	{\mathsf c}

\def\FBP		{\text{FBP}}
\def\DFR		{\text{DFR}}
\def\NFFT		{\text{NFFT}}

\def\dB			{\mathrm{dB}}

%

\DeclareMathOperator{\SSIM}{SSIM}

\DeclareMathOperator{\supp}{supp}

\DeclareMathOperator*{\argmax}{arg\,max}
\DeclareMathOperator*{\argmin}{arg\,min}

\begin{document}

\title{Fourier-Domain Inversion for the \\ Modulo Radon Transform}

\author{Matthias Beckmann, Ayush Bhandari, and Meira Iske%
\thanks{Initial results pertaining to this work were presented at IEEE ICIP 2022~\cite{Beckmann2022a}.}%
\thanks{This work was supported by the U.K.~Research and Innovation
council’s Future Leaders Fellowship program ``Sensing Beyond Barriers'' (MRC Fellowship MR/S034897/1) and the Deutsche Forschungsgemeinschaft (DFG) - Project number 530863002.}%
\thanks{M.~Beckmann is with the Center for Industrial Mathematics, University of Bremen, Germany and also with the Dept.\ of Electrical and Electronic Engineering, Imperial College London, U.K.~(Email: \texttt{research@mbeckmann.de}).}%
\thanks{A.~Bhandari is with the Dept.\ of Electrical and Electronic
Engineering, Imperial College London, U.K.~(Email:\texttt{ ayush@alum.MIT.edu}).}%
\thanks{M.~Iske is with the Center for Industrial Mathematics, University of Bremen, Germany (Email:\texttt{iskem@uni-bremen.de}) and acknowledges funding by the Deutsche Forschungsgemeinschaft (DFG) - Project number 281474342.}%
\thanks{The authors are listed in alphabetical order.}}

\markboth{Beckmann \MakeLowercase{\textit{et al.}}: Fourier-Domain Inversion for the Modulo Radon Transform}%
{Beckmann \MakeLowercase{\textit{et al.}}: Fourier-Domain Inversion for the Modulo Radon Transform}

\maketitle
\thispagestyle{empty}

\begin{abstract}
Inspired by the multiple-exposure fusion approach in computational photography, recently, several practitioners have explored the idea of high dynamic range (HDR) X-ray imaging and tomography. While establishing promising results, these approaches inherit the limitations of multiple-exposure fusion strategy. To overcome these disadvantages, the modulo Radon transform (MRT) has been proposed. The MRT is based on a co-design of hardware and algorithms. In the hardware step, Radon transform projections are folded using modulo non-linearities. Thereon, recovery is performed by algorithmically inverting the folding, thus enabling a single-shot, HDR approach to tomography. The first steps in this topic established rigorous mathematical treatment to the problem of reconstruction from folded projections. This paper takes a step forward by proposing a new, Fourier domain recovery algorithm that is backed by mathematical guarantees. The advantages include recovery at lower sampling rates while being agnostic to modulo threshold, lower computational complexity and empirical robustness to system noise. Beyond numerical simulations, we use prototype modulo ADC based hardware experiments to validate our claims. In particular, we report image recovery based on hardware measurements up to 10 times larger than the sensor's  dynamic range while benefiting with lower quantization noise ($\sim$12 dB).
\end{abstract}

\begin{IEEEkeywords}
X-ray computerized tomography, high dynamic range, Radon transform, modulo non-linearity, sampling theory.
\end{IEEEkeywords}

\section{Introduction}

In the recent years, practitioners in the area of tomography have started to develop methods for high dynamic range (HDR) imaging. HDR tomography allows for recovery of tomograms beyond the restrictions imposed by the detector's fixed albeit limited dynamic range. The first approaches were inspired by computational photography for HDR imaging \cite{Debevec1997,Bhandari:2022:B}. Such approaches rely on \emph{multi-exposure fusion}. In the tomography context, this translates to acquiring multiple low dynamic range (LDR) measurements at a different energy/exposure level, thus accounting for multi-exposures \cite{Bhandari:2022:B}. Subsequently, the LDR images are algorithmically ``fused'' into a single, HDR image. In summary, given a detector of fixed dynamic range, this strategy enables recovery of tomographic images with a dynamic range that is much larger than the detector's maximum range.

\begin{figure}[!t]
\centering
\includegraphics[width=0.95\linewidth]{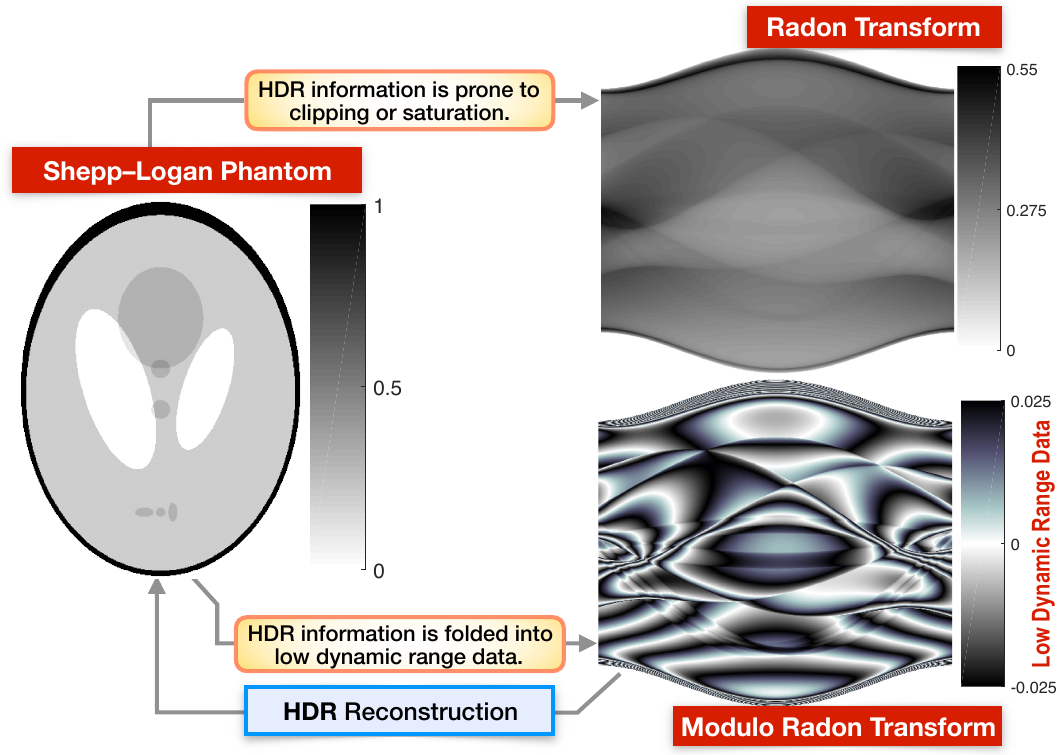}
\caption{Overview of the computational imaging pipeline for single-shot HDR tomography via the modulo Radon transform (MRT) \cite{Bhandari2020,Beckmann2020,Beckmann2022}.}
\label{fig:scheme}
\end{figure}

\bpara{Emerging Literature on HDR Tomography.} 
First attempts by Trpovski \etal \cite{Trpovski2013} on HDR recovery were based on X-ray image pair fusion.
Follow-up works by Chen \etal \cite{Chen2015} and by Haidekker \etal \cite{Haidekker2017} were based on a multiple exposure approach for X-ray imaging. For an illustrative example of HDR X-ray, we refer the readers to Fig.~6 in \cite{Haidekker2017}. A pixel-level design for an HDR X-ray imaging setup was considered by Weiss \etal \cite{Weiss2017}. Akin to computational photography, multi-exposure X-ray imaging requires acquisition with different gains. This is achieved by  careful calibration at each exposure. The work of Li \etal \cite{Li2018} presents an automated approach to this problem. Going beyond the case of single projection angle, exposure adaption can be performed across the scanning angles. This approach was investigated by Chen \etal in \cite{Chen2020}.

\bpara{HDR Tomography via Modulo Radon Transform.} On the one hand, the approaches in \cite{Chen2015,Haidekker2017,Weiss2017,Li2018,Chen2020} take a new step forward in terms of HDR X-ray imaging. On the other hand, since the basic technique is pivoted on HDR photography \cite{Debevec1997}, in the tomography context, the challenges posed by the limitations intrinsic to multi-exposure fusion still need to be resolved. These limitations include, (i) ghosting artifacts, (ii) exposure calibration problem, (iii) efficient tone mapping methods, and, (iv) unknown sensor response. Detailed aspects of these limitations are discussed in \cite{Beckmann2022}. Further to these limitations attributed to multi-exposure fusion, we note that prior work on HDR X-ray imaging \cite{Chen2015,Haidekker2017,Weiss2017,Li2018,Chen2020} is focused on empirical experiments. This necessitates the development of principled approaches for HDR X-ray imaging that are backed by mathematical guarantees and lead to efficient reconstruction algorithms amenable to practical scenarios. 

Towards this end, in the recent line of works, the modulo Radon transform (MRT) \cite{Bhandari2020,Beckmann2020,Beckmann2022} was introduced to facilitate a single-shot, HDR tomography solution. MRT based HDR imaging jointly harnesses a collaboration between hardware and algorithms. A breakdown of the pipeline is as follows:
\begin{enumerate}[label = $\bullet$,leftmargin = *]
\item {\bf Hardware.} In the hardware pipeline, instead of digitizing the conventional Radon transform (RT), in the MRT case, one ``folds'' the RT in analog or continuous-time domain via modulo non-linearities. This maps the HDR, RT projection into LDR, folded signal which is subsequently sampled or digitized using analog-to-digital converters. The folding structure is inspired by the Unlimited Sensing Framework (USF) \cite{Bhandari:2017:C,Bhandari:2020:Ja,Bhandari:2021:J}. Injecting modulo non-linearity in the continuous-time domain has two distinct advantages. 
  
\begin{enumerate}[leftmargin = *]

\item Conventionally, due to their arbitrary dynamic range, the RT projections may saturate the detector \cite{Chen2015}. This results in permanent loss of information via saturated or clipped measurements. In our case, the folding approach ensures that the MRT measurements are bounded by the modulo threshold \cite{Beckmann2022,Bhandari:2021:J}. 

\item Given a fixed bit budget or digital resolution, lower dynamic range results in higher quantization resolution. Since MRT measurements are bounded by a much smaller range than the RT projections, the resulting measurements leverage the advantage of lower quantization noise floor.
\end{enumerate} 
  
\item {\bf Algorithms.} The inversion of MRT leads to a new class of non-linear inverse problems. Recovery algorithms that ``unfold'' LDR projections into HDR measurements constitute the reconstruction pipeline for MRT recovery. For first results on this topic, we refer to \cite{Beckmann2022}.

\end{enumerate}

\bpara{Existing Art.} In our recent works \cite{Bhandari2020,Beckmann2020,Beckmann2022}, we have considered the inversion of the MRT using a sequential approach. Starting with folded RT projections, these approaches work by first ``unfolding'' the MRT projections per scan angle and then using existing methods for inverting the RT, e.g. filtered back projection. Rigorous treatment of the topic together with a proof-of-concept hardware validation was demonstrated in \cite{Beckmann2022}. These ideas provide an \emph{end-to-end} implementation of the MRT approach, clearly showing its potential value in terms of HDR tomography. Simultaneously, since computational sensing and imaging methods rely on a melding of hardware and algorithms, this work also raises several interesting questions on the theoretical and practical frontiers.

\bpara{Motivation and Contributions.}
The end-to-end implementation of the MRT, tying both the theory and practice aspects in a holistic fashion, motivate the following observations. In the context of our previous recovery approach \cite{Bhandari2020,Beckmann2020,Beckmann2022},
\begin{enumerate}[label = $\bullet$,leftmargin = *]
\item reconstruction is based on the inversion of higher-order differences. This leads to numerical instabilities specially in the presence of noise, thus limiting the true potential of HDR recovery. This calls for development of efficient and stable recovery methods. 

\item a factor of $\pi\e$ oversampling is required. How can we achieve tighter sampling rates? This will ease the burden on reconstruction algorithms.

\item modulo ADC calibration is a must as the knowledge of ADC threshold $\lambda$ is required for the algorithm to work. Hence, an algorithm that is agnostic or blind to this parameter is highly desirable. 
\end{enumerate}

Continuing along the directions of our recent presentation \cite{Beckmann2022a}, in this paper, we address the above points by developing a Fourier domain recovery method. Note that the existing methods \cite{Bhandari2020,Beckmann2020,Beckmann2022} are solely based on spatial domain processing. In that regard, our Fourier domain algorithm broadens the algorithmic scope when it comes to using the full range of recovery methods linked with tomography. A major challenge in deriving Fourier domain perspective in our setting is that working with non-linearities is particularly difficult in the transform domain. To this end, our work takes a step forward in developing new insights for inversion. In summary, our main contributions are as follows: 
\begin{enumerate}[label = $\mathrm{C}_{\arabic*})$,leftmargin = *]
\item {\bf Algorithmic Novelty.} We present a novel, non-sequential algorithm for inversion of the MRT that directly works in the Fourier domain. The advantages are twofold. 

\begin{enumerate}[leftmargin = *]
\item It leads to an efficient algorithmic implementation. Firstly, the runtime \textbf{algorithmic complexity} analysis is presented in Section~\ref{subsec:complexity} and shows a reduction of computational costs by one order of magnitude (up to logarithmic factor). Secondly, working directly in the Fourier domain enables backwards compatibility with existing tools for Fourier RT reconstruction e.g. the methods surveyed in Section~\ref{subsec:DFR_Radon}.

\item Our algorithm is agnostic to the folding threshold $\lambda$ which circumvents the problem of ADC calibration.
\end{enumerate}

\item {\bf Recovery Guarantees.} Our algorithm is backed by mathematical guarantees presented in Section~\ref{subsec:rec_guarantee}. In particular, our recovery algorithm works with any sampling rate above Nyquist rate (see Corollary~\ref{cor:rec_setting}). This is factor $\pi \e$ improvement over previous result. 

\item {\bf Hardware Validation.} Experiments using modulo ADC hardware \cite{Bhandari:2021:J} shows the distinct advantages of our underpinning theory. In particular, (a) recovery at much lower sampling rates, (b) HDR recovery with higher digital resolution or lower quantization noise, and, (c) empirical robustness in the presence of system noise and outliers.
\end{enumerate}

\bpara{Paper Overview.}
This paper is organized as follows. We begin our discussion by revisiting the modulo Radon transform and the associated forward model in \secref{sec:MRT}. Our main results are covered in \secref{sec:theory}. In \secref{subsec:MRTOMP} we present a sequential reconstruction approach; the associated theoretical guarantees are derived in \secref{subsec:rec_guarantee}. A direct reconstruction approach is presented in \secref{subsec:DFR} which makes our approach compatible with a variety of exiting methods for the conventional Radon transform briefly reviewed in \secref{subsec:DFR_Radon}. Computational complexity analysis is presented in \secref{subsec:complexity}. 
\secref{sec:numerics} is dedicated to numerical and hardware experiments which validate the methods developed in \secref{sec:theory}.
Finally, we summarize our results and point to future research in \secref{sec:conc}.

\begin{table}[!t]
\caption{Mathematical symbols, variables, operators and parameters.}
\label{tab:variables}
\centering
\renewcommand{\arraystretch}{1.05}
\begin{tabular}{p{1.25cm}<{\centering}|p{6.75cm}<{\centering}}
\toprule[1.5pt]
Symbol & Meaning \\
\midrule
$\lambda$ & modulo threshold \\
$\Omega$ & bandwidth \\
$\rho$ & $\lambda$-exceedance parameter \\
$\varepsilon$ & termination threshold for OMP \\
$\Radon f$ & Radon transform of $f$ \\
$\Mod_\lambda$ & modulo operator with threshold $\lambda$ \\
$\Modulo^\lambda f$ & modulo Radon transform of $f$ \\
$\Fourier_d h$ & $d$-dimensional Fourier transform of $h$ \\
$p_\bftheta$ & Radon projection with direction $\bftheta$ \\
$p^\lambda_\bftheta$ & modulo Radon projection with direction $\bftheta$ \\
$s_\bftheta^\lambda$ & residual function \\
$\E_{N_\Omega,N}$ & set of in-band frequencies \\
$\Delta$ & forward difference operator \\
$\Sum$ & anti-difference operator \\
$K$ & number of radial samples (left of origin) \\
$K'$ & number of radial samples (right of origin) \\
$M$ & number of angular samples \\
$N$ & number of radial samples reduced by one \\
$N_\Omega$ & effective bandwidth \\
$R$ & size of reconstruction grid \\
$p_\bftheta[k]$ & samples of $p_\bftheta$ at $t_{k-K} = (k-K)\T$ \\
$\underline{p}_\bftheta[k]$ & forward differences of $p_\bftheta[k]$ \\
$\widehat{\underline{p}}_\bftheta[n]$ & discrete Fourier transform of $\underline{p}_\bftheta[k]$ \\
$\widetilde{p}_\bftheta[n]$ & reduced discrete Fourier transform of $p_\bftheta[k]$ \\
$\Ell_\lambda$ & set of frequencies in $\widehat{\underline{s}}_\bftheta^\lambda[n]$ \\
\bottomrule[1.5pt]
\end{tabular}
\end{table}

\section{Revisiting Modulo Radon Transform}
\label{sec:MRT}

For convenience, we summarize the notation in Table~\ref{tab:variables}.

\bpara{Forward Model.} The recently introduced modulo Radon transform (MRT) allows for a range reduction of classical Radon measurements.
For a bivariate function $f \in \Lebesgue^1(\R^2)$, the Radon transform $\Radon f$ is defined as
\begin{equation*}
\Radon f(t,\bftheta) = \int_{\bfx \cdot \bftheta = t} f(\bfx) \: \d \bfx
\quad \text{for } (t,\bftheta) \in \R \times \Sphere. 
\end{equation*}
Its composition with the modulo operator $\Mod_\lambda$ yields the {\em modulo Radon transform} $\Modulo^\lambda f$ with
\begin{equation*}
\Modulo^\lambda f (t,\bftheta) = \Mod_\lambda(\Radon f(t,\bftheta))
\quad \text{for } (t,\bftheta) \in \R \times \Sphere
\end{equation*}
for a fixed modulo threshold $\lambda > 0$, first introduced in~\cite{Bhandari2020,Beckmann2020,Beckmann2022}.
Here, the $2\lambda$-centered modulo operation is given by 
\begin{equation*}
\Mod_\lambda(t) = t - 2\lambda \left\lfloor \frac{t+\lambda}{2\lambda} \right\rfloor
\quad \text{for } t \in \R,
\end{equation*} 
where $\lfloor \cdot \rfloor$ denotes the floor function for real numbers.
For the sake of brevity, in the following, we write ${\Radon_\bftheta f = \Radon f(\cdot,\bftheta)}$ and ${\Modulo_\bftheta^\lambda f = \Modulo^\lambda f (\cdot,\bftheta)}$, respectively.
Moreover, $\bftheta \in \Sphere$ can be written as $\bftheta = \bftheta(\varphi) = (\cos(\varphi), \sin(\varphi))^\top$ with $\varphi \in [0,2\pi)$ and due to the symmetry property
\begin{equation*}
\Modulo^\lambda f (t,\bftheta(\varphi+\pi)) = \Modulo^\lambda f (-t,\bftheta(\varphi))
\end{equation*}
it suffices to consider $t \in \R$ and $\varphi \in [0,\pi)$.
The corresponding inverse problem can be formulated as recovering $f$ from $g$, where the function $g$ fulfils the non-linear operator equation
\begin{equation}
\label{eq:inv_prob}
\Modulo^\lambda f = g
\end{equation}
and, typically, only finitely many noisy samples of $g$ are given.

\bpara{Pre-filtering and Sampling Architecture.}
In~\cite{Beckmann2022} it has been shown that the modulo Radon transform is injective on the Bernstein space of bandlimited integrable functions and that any function $f \in \Lebesgue^1(\R^2)$ of bandwidth $\Omega > 0$ is uniquely determined by the semi-discrete samples $\{\Modulo^\lambda f(k \T,\bftheta) \mid k \in \Z, ~ \bftheta \in \Sphere\}$ with radial sampling rate ${\T < \frac{\pi}{\Omega}}$.
To deal with compactly supported target functions with $\supp(f) \subseteq B_1(0)$, we follow the approach in~\cite{Beckmann2022} and perform a pre-filtering step with the ideal low-pass filter to the Radon transform to enforce bandlimitedness.
Subsequently, we fold the filtered Radon data into the symmetric interval $[-\lambda,\lambda]$, leading to the following sampling architecture: 
\begin{enumerate}[label=S\textsubscript{\arabic*})]
\item \label{enum:pre1} Compute the Radon projections $p_\bftheta = \Radon_\bftheta f \ast \Phi_\Omega$ by applying the ideal low-pass filter with $\Fourier_1 \Phi_\Omega = \ind_{[-\Omega,\Omega]}$.
\item \label{enum:pre2} Compute the modulo Radon projections $p_\bftheta^\lambda = \Mod_\lambda(p_\bftheta)$ with modulo threshold $\lambda > 0$.
\end{enumerate}
Note that, due to the Fourier slice theorem, the pre-filtering step~\ref{enum:pre1} also leads to a bandlimited version $f_\Omega$ of $f$ via
\begin{equation*}
\Fourier_2 f_\Omega(\sigma \bftheta) = \Fourier_1(p_\bftheta) (\sigma) \quad \forall(\sigma,\bftheta) \in \R \times \Sphere, 
\end{equation*}
where $\Fourier_d$ denotes the $d$-dimensional Fourier transform, 
\begin{equation*}
\Fourier_d h(\bfomega) = \int_{\R^d} h(\bfx) \, \e^{-\i \bfomega \cdot \bfx} \: \d \bfx
\quad \text{for } \bfomega \in \R^d.
\end{equation*}

For applications, the data needs to be fully discretized.
Therefore, we sample $p_\bftheta^\lambda$ with respect to both the radial and the angular variable to obtain the dataset in parallel beam geometry,
\begin{equation}
\label{eq:discr_data}
\Bigl\{ p_{\bftheta_m}^\lambda(t_k) \Bigm| k = -K,\hdots,K', ~ m = 0,\hdots,M-1\Bigr\},
\end{equation}
with radial sampling rate $\T < \frac{\pi}{\Omega}$ and angular sampling rate $\Delta \varphi = \frac{\pi}{M}$, i.e., $t_k = k \T$ and $\bftheta_m = \bftheta(\varphi_m)$ with $\varphi_m = m \Delta \varphi$.

\section{Fourier Based Modulo Radon Inversion}
\label{sec:theory}

\subsection{Sequential Reconstruction Approach}
\label{subsec:MRTOMP}

We start with explaining a sequential recovery approach that is based on a Fourier domain interpretation of the MRT reconstruction problem and allows for a reformulation of the reconstruction problem as a frequency fitting task.
More precisely, by applying a classical sparse approximation tool, namely the {\em orthogonal matching pursuit} algorithm, in Fourier domain, the Radon projections are recovered from the modulo Radon projections and, in spatial domain, the target function is reconstructed by applying the classical filtered back projection formula.
We remark that this approach has been introduced in~\cite{Beckmann2022a}, but still asks for a mathematical theory that guarantees recovery.
In the following, we will take first steps in this direction.

\subsubsection*{First Step}

Since $p_\bftheta$ is bandlimited with bandwidth $\Omega > 0$ due to the above pre-filtering step, its Fourier series can be approximated by 
\begin{equation*}
p_\bftheta(t) \approx \sum\nolimits_{|n| \leq N_\Omega} c_n(p_\bftheta) \, \exp(\i \omega_0 n t),
\end{equation*}
where $c_n(p_\bftheta)$ denotes the $n$-th Fourier coefficient of $p_\bftheta$ and the effective bandwidth $N_\Omega = \lceil \nicefrac{\Omega}{\omega_0}\rceil$ with $\omega_0 = \frac{2\pi}{(K+K'+1)\T}$ defines the number of frequency samples contained in the supported frequency range $[-\Omega,\Omega]$.
In the following, we require that $K' \geq K \geq N_\Omega + 1$.
For simplicity, we set $N = K + K'$ and write $p_\bftheta[k] = p_\bftheta(t_{k-K})$ for $k = 0,\ldots,N$.
The $\Mod_\lambda$ operation decomposes the Radon projection $p_\bftheta$ into the modulo Radon projection $p_\bftheta^\lambda$ and the step function $s_\bftheta^\lambda$ by 
\begin{equation}
\label{eq:decomp}
p_\bftheta[k] = p_\bftheta^\lambda[k] + s_\bftheta^\lambda[k]
\quad \text{for } k = 0,\ldots,N,
\end{equation}
where we have $s_\bftheta^\lambda[k] = \sum_{\ell} \alpha_\ell \ind_{I_\ell}(k\T)$ with pairwise disjoint proper intervals $I_\ell \subseteq [0,N\T]$ and coefficients ${\alpha_\ell \in 2\lambda \Z}$, cf.~\cite{Beckmann2022, Bhandari:2020:Ja}.
Due to the linearity of the forward difference operator ${\Delta: \R^{N+1} \to \R^N}$, defined by ${\Delta z[k] = z[k+1]-z[k]}$, the modulo decomposition in~\eqref{eq:decomp} also applies to ${\underline{p}_\bftheta = \Delta p_\bftheta}$ and to its discrete Fourier transform $\widehat{\underline{p}}_\bftheta$ given by
\begin{equation*}
\widehat{\underline{p}}_\bftheta[n] = \sum_{k=0}^{N-1} \underline{p}_\bftheta[k] \, \exp\Bigl(-\frac{2\pi \i n k}{N}\Bigr)
\quad \text{for } n = 0,\ldots,N-1.
\end{equation*}
The bandlimitedness of $p_\bftheta$ then implies 
\begin{equation}
\label{eq:decomp2}
\widehat{\underline{p}}_\bftheta^\lambda [n] =
\begin{cases}
\widehat{\underline{p}}_\bftheta[n] - \widehat{\underline{s}}_\bftheta^\lambda[n]  & \text{for }  n \in \E_{N_\Omega,N} \\
- \widehat{\underline{s}}_\bftheta^\lambda[n] & \text{for }  n \notin \E_{N_\Omega,N}
\end{cases}
\end{equation}
with indices ${\E_{N_\Omega,N} = \{0,\ldots,N_\Omega \} \cup \{N-N_\Omega,\ldots,N-1\}}$.
In particular, the signal $\widehat{\underline{s}}_\bftheta^\lambda$ can be written as
\begin{equation*}
\widehat{\underline{s}}_\bftheta^\lambda[n] = \sum\nolimits_{\ell \in \Ell_\lambda} c_\ell \exp\left( -\i \frac{\underline{\omega}_0 n}{\T}t_\ell \right)
\quad \text{for }  n = 0,\dots,N-1
\end{equation*}
with $\underline{\omega}_0 = \frac{2\pi}{N}$, $\Ell_\lambda \subseteq \{0,\dots,N\}$ and $t_\ell \in (\T\Z) \cap [0,N\T]$.

\smallskip

In order to obtain a discretized solution to problem~\eqref{eq:inv_prob}, firstly, we aim at reconstructing $\widehat{\underline{p}}_\bftheta$ and, thus, $\widehat{\underline{s}}_\bftheta^\lambda$ for all $\bftheta \in \Sphere$, which corresponds to the recovery of the set of parameters $\{c_\ell, t_\ell \}_{\ell \in \Ell_\lambda}$.
According to~\eqref{eq:decomp2}, we have access to ${\widehat{\underline{s}}_\bftheta^\lambda[n] = - \widehat{\underline{p}}_\bftheta^\lambda[n]}$ for $n \in \{0,\ldots,N-1\}\setminus \E_{N_\Omega,N} = \E_{N_\Omega,N}^\complement$.
This can be used to formulate the sparse minimization problem 
\begin{equation}
\label{eq:obj}
\min_{\bfc \in \C^{N+1}} \lVert\bfc\rVert_0
\quad \text{ such that } \quad
\bfV \bfc = \bfs,
\end{equation}
where the measurement vector $\bfs \in \C^{N-2N_\Omega-1}$ is given by 
\begin{equation}
\label{eq:def_s}
\bfs_{n-N_\Omega} = \widehat{\underline{s}}_\bftheta^\lambda[n]
\quad \text{for } n \in \E_{N_\Omega,N}^\complement 
\end{equation}
and
\begin{equation}
\label{eq:def_V}
\bfV_{n-N_\Omega,\ell+1} = \e^{-\i \underline{\omega}_0 n \ell}
\quad \text{for } \ell = 0,\ldots,N
\end{equation}
defines a {\em Vandermonde matrix} $\bfV \in \C^{(N-2N_\Omega-1) \times (N+1)}$.
The solution vector $\bfc \in \C^{N+1}$ of~\eqref{eq:obj} encodes the sought parameters $\{c_\ell\}_{\ell_\in \Ell_\lambda}$ as its non-zero vector components and the set $\{t_\ell\}_{\ell \in \Ell_\lambda}$ is determined by the corresponding indices.

We solve the sparse optimization problem by applying the orthogonal matching pursuit~(OMP) algorithm, which was first proposed in~\cite{Mallat1993}.
Here, we use a variant as described in Algorithm~\ref{alg:omp}.
Given the estimates $\{c_\ell,t_\ell\}_{\ell_\in \Ell_\lambda}$, we can form $\widehat{\underline{p}}_{\bftheta} = \widehat{\underline{p}}_{\bftheta}^\lambda + \widehat{\underline{s}}_{\bftheta}^\lambda$ via
\begin{equation*}
\widehat{\underline{s}}_\bftheta^\lambda[n] = \sum\nolimits_{\ell \in \Ell_\lambda} c_\ell \exp\left( -\i \frac{\underline{\omega}_0 n}{\T}t_\ell \right)
\end{equation*}
and compute $\underline{p}_\bftheta = \Delta p_\bftheta$ by its inverse discrete Fourier transform.
To recover the the unfolded Radon projections $p_\bftheta$, we make use of the following notion of {\em compact $\lambda$-exceedance}, first introduced in\cite{Beckmann2022}. 

\begin{definition}[Compact $\lambda$-exceedance]
\label{def:comp_lambda_ex}
For $\lambda > 0$, a function $g: \R \to \R$ is called of {\em compact $\lambda$-exceedance with parameter~$\rho > 0$} if
\begin{equation*}
|g(t)| < \lambda
\quad \mbox{ for }  |t| \ > \rho,
\end{equation*}
in which case we write $g \in \Band_\rho^\lambda$.
\end{definition}
Assuming $p_\bftheta \in \Band_\rho^\lambda$ for all $\bftheta \in \Sphere$ and $K \geq \rho\T^{-1}$, we can recover $p_\bftheta$ by applying the anti-difference operator $\Sum: \R^N \to \R^{N+1}$, defined by $\Sum z[k] = \sum_{j < k} z[j]$, as we then have
\begin{equation*}
p_\bftheta = \Sum(\Delta p_\bftheta) + p_\bftheta[0] = \Sum(\Delta p_\bftheta) + p_\bftheta^\lambda[0].
\end{equation*}

\subsubsection*{Second Step}

Having recovered the Radon projections $p_\bftheta$ from modulo Radon projections $p_\bftheta^\lambda$, we now reconstruct the target function $f$ by applying the approximate filtered back projection (FBP) formula
\begin{equation}
\label{eq:FBP_approximate}
f_\FBP = \frac{1}{2} \Radon^\# (F_\Omega * p_\bftheta),
\end{equation}
where $F_\Omega$ is a bandlimited reconstruction filter satisfying $\Fourier_1 F_\Omega(S) = |S| \, W(\frac{S}{\Omega})$ with an even window $W \in \Lebesgue^\infty(\R)$ supported in $[-1,1]$ and where $\Radon^\#$ denotes the adjoint operator of the Radon transform given by the back projection
\begin{equation*}
\Radon^\# g(\bfx) = \frac{1}{2\pi} \int_{\Sphere} g(\bfx \cdot \bftheta, \bftheta) \: \d \bftheta
\quad \text{for } \bfx \in \R^2.
\end{equation*}
As $\Phi_\Omega, F_\Omega$ have the same bandwidth, \eqref{eq:FBP_approximate} can be rewritten as
\begin{equation*}
f_\FBP = \frac{1}{2} \Radon^\# (F_\Omega * \Radon_\bftheta f)
\end{equation*}
and we refer to~\cite{Beckmann2019, Beckmann2020a, Beckmann2021} for a detailed discussion of the reconstruction error $\|f - f_\FBP\|_{\Lebesgue^2(\R^2)}$.

\setlength{\textfloatsep}{10pt}
\begin{algorithm}[t]
\setstretch{1.1}
\caption{OMP Algorithm}
\label{alg:omp}
\begin{algorithmic}[1]
\Require Signal $\bfs$ and matrix $\bfV$, $\varepsilon > 0$, $\bfc^{(0)} = 0$, $\S^{(0)} = \emptyset$
\medskip
\While{$\lVert\bfV^\ast(\bfs - \bfV \bfc^{(i-1)})\rVert_\infty > \varepsilon $}
\smallskip
\State $j^{(i)} = \argmax_{0 \leq j \leq N} \lvert [\bfV^\ast(\bfs - \bfV \bfc^{(i-1)})]_j \rvert$
\State $\S^{(i)} = \S^{(i-1)} \cup \{j^{(i)}\}$
\State $\bfc^{(i)} = \argmin_{\bfc} \left\{\lVert\bfs - \bfV \bfc\rVert_2 \mid \supp(\bfc) \subseteq \S^{(i)}\right\}$
\smallskip
\EndWhile
\medskip
\Ensure $\{c_\ell,t_\ell \}_{\ell \in \Ell_\lambda}$, where $\Ell_\lambda = \S^{(i_{\text{end}})}$ and $t_\ell = \ell \T$.
\end{algorithmic}
\end{algorithm}

The overall recovery scheme, called \ompfbp method, is summarized in Algorithm~\ref{alg:omp-fbp}, where formula~\eqref{eq:FBP_approximate} is discretized using a standard approach.
In more details, we apply the composite trapezoidal rule to discretize the convolution $\ast$ and back projection $\Radon^\#$ leading to the discrete reconstruction via
\begin{equation*}
f_\FBP(\bfx) = \frac{\T}{2M} \sum_{m=0}^{M-1} \sum_{k = -K}^{K'} F_\Omega(\bfx \cdot \bftheta_m - t_k) \, p_{\bftheta_m}(t_k),
\end{equation*}
and more concisely, $f_\FBP = \frac{1}{2} \Radon^\#_D(F_\Omega *_D p_\bftheta)$.
To reduce the computational costs, one typically evaluates the function
\begin{equation*}
h_{\bftheta_j}(t) = (F_\Omega *_D p_{\bftheta_j})(t)
\quad \mbox{ for } t \in \R
\end{equation*}
only at $t = t_i$, $i \in I$, for a sufficiently large $I \subset \Z$ and interpolates the value $h_{\bftheta_k}(t)$ for $t = \bfx \cdot \bftheta_j$ using a suitable scheme.
Due to~\cite{Natterer2001}, the optimal sampling conditions for fixed bandwidth $\Omega > 0$ read $\T \leq \frac{\pi}{\Omega}$, $K' = K \geq \frac{1}{\T}$ and $M \geq \Omega$.

\subsection{Theoretical Recovery Guarantees}
\label{subsec:rec_guarantee}

Based on Algorithm~\ref{alg:omp-fbp}, we now derive theoretical guarantees for the exact recovery of the Radon samples $p_\bftheta[k]$ from $p_\bftheta^\lambda[k]$ for $k = 0,\ldots,N$ and fixed $\bftheta \in \Sphere$.
To this end, we first prove an injectivity property of certain \emph{reduced Vandermonde matrices}.
Thereon, suitable conditions on the index set $\S$ in Algorithm~\ref{alg:omp} lead to the exact reconstruction of the parameter set $\{c_\ell\}_{\ell \in \Ell_\lambda}$.
Finally, we show that this will imply the exact reconstruction of $p_\bftheta$.

In the following we consider the \emph{Vandermonde matrix}
\begin{equation}
\label{eq:gen_van}
\bfV(\bfz) = \begin{pmatrix}
1 & z_1 & z_1^2 & \cdots & z_1^{L-1} \\
1 & z_2 & z_2^2& \cdots & z_2^{L-1} \\
\vdots & \vdots & \vdots & & \vdots \\
1 & z_J & z_J^2 & \cdots & z_J^{L-1}
\end{pmatrix} \in \C^{J \times L}
\end{equation}
for $\bfz = (z_1,\hdots,z_J)^\top \in \C^J$ with $J \in \N$ and $L \in \N$. 
Furthermore,  we define the \emph{reduced Vandermonde matrix}
\begin{equation*}
\bfV_\S(\bfz) = \bigl(\bfV_m(\bfz)\bigr)_{m \in \S}
\end{equation*}
for $\S \subseteq \{0,\dots,L-1\}$, i.e., $\bfV_\S(\bfz)$ consists of the columns $\bfV_m(\bfz)$ of $\bfV(\bfz)$ with $m \in \S$.
In the following Lemma we show a sufficient condition for the injecticity of certain reduced Vandermonde matrices.

\begin{lemma}
\label{lemma:V_one-to-one}
Let $\bfz = (z_1,\dots,z_J) \in \C^J$ with $z_i \neq z_j$ for all $i \neq j$.
Moreover, let $\S \subseteq \{0,\ldots,L-1\}$.
Then, the reduced Vandermonde matrix $\bfV_\S(\bfz) \in \C^{J \times |\S|}$ of $\bfV(\bfz) \in \C^{J \times L}$ in~\eqref{eq:gen_van} is injective if $\M = \max_{m \in \S} |m| < J$.
\end{lemma}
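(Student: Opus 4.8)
The plan is to prove injectivity by showing that $\bfV_\S(\bfz)$ has trivial kernel, i.e.\ that the selected columns $\{(z_1^m,\dots,z_J^m)^\top \mid m \in \S\}$ are linearly independent. First I would take an arbitrary coefficient vector $\bfc = (c_m)_{m \in \S}$ lying in the kernel, so that $\bfV_\S(\bfz)\bfc = 0$. Reading this matrix equation row by row against the definition~\eqref{eq:gen_van} yields the scalar system $\sum_{m \in \S} c_m z_i^m = 0$ for every $i = 1,\dots,J$. The goal is then to force $\bfc = 0$.

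The key idea is to package the unknown coefficients into a single univariate polynomial $p(z) = \sum_{m \in \S} c_m z^m$ over $\C$. Since every exponent occurring in $p$ belongs to $\S$ and all indices are nonnegative (so $|m| = m$ and hence $\max_{m\in\S}|m| = \M$), we have $\deg p \leq \M$. The kernel condition states precisely that the $J$ nodes $z_1,\dots,z_J$ are all roots of $p$. The decisive step, and the only place where the hypothesis $\M < J$ is used, is the classical fact that a nonzero polynomial over $\C$ of degree $d$ has at most $d$ distinct roots. Because the $z_i$ are pairwise distinct by assumption, $p$ would have at least $J$ distinct roots; combined with $\deg p \leq \M < J$, this is impossible unless $p$ is the zero polynomial. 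Consequently all $c_m$ vanish, so $\bfc = 0$ and $\bfV_\S(\bfz)$ is injective.

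I do not expect a genuine obstacle here, as this is essentially the classical nonvanishing argument for Vandermonde systems, adapted to a chosen subcollection of columns. The single point deserving care is the observation that the degree of $p$ is controlled by the \emph{largest surviving index} $\M$ rather than by $L-1$: this is exactly why the reduced matrix can be injective even though the full matrix $\bfV(\bfz) \in \C^{J \times L}$ with $L > J$ cannot be. I would therefore present the polynomial formulation explicitly to make the degree bound transparent, after which the conclusion is immediate from the root-counting principle.
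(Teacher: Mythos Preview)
Your proposal is correct and follows essentially the same argument as the paper: both reduce the kernel condition to the vanishing of a univariate polynomial of degree at most $\M$ at the $J$ distinct nodes $z_1,\dots,z_J$, and conclude via the root-counting principle that the polynomial (and hence the coefficient vector) is identically zero. The only cosmetic difference is that the paper explicitly orders the indices of $\S$ before forming the polynomial, which you do not need to do.
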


\begin{proof}
Let the indices of $\S = \{m_j\}_{j=1}^{|\S|}$ be of increasing order, i.e., $m_1 < m_2 < \dots < m_{|\S|}$ so that $m_{|\S|} = \M$.
We denote the columns of $\bfV_\S(\bfz)$ by $\bfV_m = (z_1^{m}, \dots, z_J^{m})^\top \in \C^J$.
Assume that $\gamma_{m_1} \bfV_{m_1} + \gamma_{m_2} \bfV_{m_2} + \dots + \gamma_{\M} \bfV_{\M} = 0$ for coefficients $\gamma_{m_1}, \gamma_{m_2}, \ldots, \gamma_{\M} \in \C$.
For each $1 \leq j \leq J$ we then have
\begin{equation}
\label{eq:lindep}
\gamma_{m_1} z_j^{m_1} + \gamma_{m_2} z_j^{m_2} + \ldots + \gamma_{\M} z_j^{\M} = 0.
\end{equation} 
Therefore,  each $z_j$ must be a root of the polynomial
\begin{equation*}
p(t) = \gamma_{m_1} t^{m_1} + \gamma_{m_2} t^{m_2} + \ldots + \gamma_{\M} t^{\M}.
\end{equation*} 
As~\eqref{eq:lindep} holds for all $j = 1,\dots,J$, $p$ has $J$ distinct roots.
Since $p$ is a complex polynomial of degree at most $\M < J$,  we have $p \equiv 0$ due to the fundamental theorem of algebra.
Therefore, $\gamma_{m_1} = \ldots = \gamma_{\M} = 0$. 
Hence, the column vectors of $\bfV_\S(\bfz)$ are linearly independent.
Consequently, the matrix $\bfV_\S(\bfz)$ has full column rank and, thus, it is injective.
\end{proof}

The result of Lemma~\ref{lemma:V_one-to-one} can be exploited to prove the exact recovery of the coefficients $\{c_\ell,t_\ell\}_{\ell \in \Ell_\lambda}$ by means of Algorithm~\ref{alg:omp}. 

\begin{theorem}(OMP recovery)
\label{thm:omp_rec}
Let $\bfV \in \C^{(N-2N_\Omega-1) \times (N+1)}$ be defined as in~\eqref{eq:def_V} and $\bfs \in \C^{N-2N_\Omega-1}$ be defined as in~\eqref{eq:def_s}.
Then, Algorithm~\ref{alg:omp} recovers the parameters $\{c_\ell,t_\ell \}_{\ell \in \Ell_\lambda}$ in $P$ iterations if $\Ell_\lambda \subseteq \S^{(P)}$ and $\max_{m \in \S^{(P)}} |m| < N-2N_\Omega-1.$
\end{theorem}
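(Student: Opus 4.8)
The plan is to reduce the assertion to the injectivity result of Lemma~\ref{lemma:V_one-to-one} combined with the least-squares step of Algorithm~\ref{alg:omp}. First I would recognise $\bfV$ as a genuine Vandermonde matrix in the sense of~\eqref{eq:gen_van}: setting $z_n = \e^{-\i \underline{\omega}_0 n}$ for $n \in \E_{N_\Omega,N}^\complement$, the definition~\eqref{eq:def_V} reads $\bfV_{n-N_\Omega,\ell+1} = z_n^\ell$, so $\bfV = \bfV(\bfz)$ with $J = N-2N_\Omega-1$ rows and $L = N+1$ columns, the row index $n-N_\Omega$ ranging over $\{1,\dots,N-2N_\Omega-1\}$. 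Since $\underline{\omega}_0 = \frac{2\pi}{N}$ and $\E_{N_\Omega,N}^\complement = \{N_\Omega+1,\dots,N-N_\Omega-1\} \subseteq \{0,\dots,N-1\}$, the nodes $z_n$ are pairwise distinct $N$-th roots of unity, so the standing hypothesis $z_i \neq z_j$ of Lemma~\ref{lemma:V_one-to-one} is met.

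Next I would record the consistency of the measurement model. Using the closed form of $\widehat{\underline{s}}_\bftheta^\lambda$ with $t_\ell = \ell\T$ and $\Ell_\lambda \subseteq \{0,\dots,N\}$, formula~\eqref{eq:def_s} becomes $\bfs_{n-N_\Omega} = \sum_{\ell \in \Ell_\lambda} c_\ell z_n^\ell$, i.e.\ $\bfs = \bfV \bfc^\star$, where the true vector $\bfc^\star \in \C^{N+1}$ carries the entries $\{c_\ell\}_{\ell \in \Ell_\lambda}$ on the coordinates indexed by $\Ell_\lambda$ and vanishes elsewhere. This exhibits $\bfs$ as an exactly realisable, noise-free right-hand side.

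The core argument then proceeds as follows. Assuming the two stated conditions hold at iteration $P$, the inclusion $\Ell_\lambda \subseteq \S^{(P)}$ makes $\bfc^\star$ feasible for the least-squares problem solved on the support $\S^{(P)}$, and it already attains residual zero; hence the minimiser $\bfc^{(P)}$ satisfies $\bfV_{\S^{(P)}}\bfc^{(P)}_{\S^{(P)}} = \bfs$. The second condition $\M = \max_{m \in \S^{(P)}}|m| < N-2N_\Omega-1 = J$ is precisely the hypothesis of Lemma~\ref{lemma:V_one-to-one}, so $\bfV_{\S^{(P)}}$ is injective and the linear system $\bfV_{\S^{(P)}}\bfc = \bfs$ has a unique solution. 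Since both $\bfc^{(P)}$ and $\bfc^\star$ (viewed as supported on $\S^{(P)}$) solve it, they coincide; in particular the coordinates in $\S^{(P)} \setminus \Ell_\lambda$ are zero and the nonzero entries reproduce $\{c_\ell\}_{\ell \in \Ell_\lambda}$, with locations recovered through $t_\ell = \ell\T$. The vanishing residual finally yields $\bfV^\ast(\bfs - \bfV\bfc^{(P)}) = 0$, so the while loop terminates, confirming recovery in $P$ iterations.

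I expect the main obstacle to be the clean transfer from injectivity to uniqueness of the OMP output: one must observe that $\Ell_\lambda \subseteq \S^{(P)}$ guarantees a consistent, zero-residual least-squares solution, while injectivity on the possibly \emph{larger} set $\S^{(P)}$—not merely on the true support $\Ell_\lambda$—is what pins the minimiser down. This is exactly why the hypothesis is phrased in terms of $\max_{m \in \S^{(P)}}|m|$ rather than the support of the underlying signal.
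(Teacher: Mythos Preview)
Your proposal is correct and follows essentially the same route as the paper: write $\bfs = \bfV\bfc^\dagger$, use $\Ell_\lambda \subseteq \S^{(P)}$ to see that $\bfc^\dagger$ is a feasible zero-residual minimiser, and invoke Lemma~\ref{lemma:V_one-to-one} on $\bfV_{\S^{(P)}}$ to force uniqueness. You are in fact more explicit than the paper in verifying that the nodes $z_n = \e^{-\i\underline{\omega}_0 n}$ are pairwise distinct and in noting that the vanishing residual triggers termination of the while loop.
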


\begin{proof}
In our setting, we have $\bfs = \widehat{\underline{s}}_\bftheta^\lambda = \bfV \bfc^\dagger$ with $\bfc^\dagger \in \C^{N+1}$ given by
\begin{equation*}
\bfc^\dagger = \begin{cases} c_\ell & \text{for } \ell \in \Ell_\lambda \\
0 & \text{otherwise.}
\end{cases}
\end{equation*} 
In step 4 of the $P$-th iteration of Algorithm~\ref{alg:omp}, we determine 
\begin{align*}
\bfc^{(P)} &= \argmin_{\bfc \in \C^{N+1}} \{\lVert\bfs - \bfV \bfc\rVert_2 \mid \supp(\bfc) \subseteq \S^{(P)}\} \\
&= \argmin_{\bfc \in \C^{N+1}} \{\lVert\bfV(\bfc^\dagger-\bfc)\rVert_2 \mid \supp(\bfc) \subseteq \S^{(P)}\} \\
&= \argmin_{\substack{\bfc \in \C^{N+1}, \\ \supp(\bfc) \subseteq \S^{(P)} }} \bigl\lVert\bfV_{\S^{(P)}}(\bfc^\dagger\vert_{\S^{(P)}}-\bfc\vert_{\S^{(P)}})\bigr\rVert_2,
\end{align*}
where $\bfc^\dagger \vert_{\S^{(P)}} = (c_j^\dagger)_{j \in \S^{(P)}}$ and $\bfc \vert_{\S^{(P)}} = (c_j)_{j \in \S^{(P)}}$.
Since $\Ell_\lambda \subseteq S^{(P)}$, $\bfc^\dagger$ belongs to the set of valid minimizers and according to Lemma~\ref{lemma:V_one-to-one}, the reduced Vandermonde matrix $\bfV_{\S^{(P)}} $ is injective.
Therefore, we have $\ker (\bfV_{\S^{(P)}}) = \{0\}$, which implies $\bfc\vert_{\S^{(P)}}  = \bfc^\dagger\vert_{\S^{(P)}} $ and, thus, $\bfc^{(P)}  = \bfc^\dagger$.
\end{proof}

\setlength{\textfloatsep}{10pt}
\begin{algorithm}[!t]
\setstretch{1.1}
\caption{\ompfbp Method}
\label{alg:omp-fbp}
\begin{algorithmic}[1]
\Require MRT samples $p_{\bftheta_m}^\lambda[k] = p_{\bftheta_m}^\lambda((k-K)\T)$ for $k \in [0,N]$ and $m \in [0,M-1]$, bandwidth $\Omega > 0$, threshold $\varepsilon > 0$
\medskip
\For{$\bftheta \in \{\bftheta_m\}_{m=0}^{M-1}$}
\smallskip
\State Set $\underline{p}_\bftheta^\lambda[n] = \Delta p_\bftheta^\lambda[n]$ and compute $\widehat{\underline{p}}_\bftheta^\lambda[n]$.
\State Estimate $\{c_\ell, t_\ell \}_{\ell \in \Ell_\lambda}$ by applying Algorithm~\ref{alg:omp}.
\State Compute $\widehat{\underline{s}}_\bftheta^\lambda[n] = \sum_{\ell \in \Ell_\lambda} c_\ell \exp \left( -\i \frac{\omega_0 n}{T} t_\ell \right)$
\State Set $\widehat{\underline{p}}_{\bftheta}[n] = \widehat{\underline{p}}_{\bftheta}^\lambda [n]+ \widehat{\underline{s}}_{\bftheta}^\lambda[n]$ and compute $\underline{p}_\bftheta[k]$.
\State  Estimate $p_\bftheta[k]$ by anti-difference.
\smallskip
\EndFor
\medskip
\Ensure \ompfbp reconstruction $f_\FBP = \frac{1}{2} \Radon^\#_D (F_\Omega *_D p_\bftheta)$.
\end{algorithmic}
\end{algorithm}

We can finally conclude recovery of the unfolded Radon projections $p_\bftheta$ by means of Algorithm~\ref{alg:omp-fbp} under the assumption of compact $\lambda$-exceedance, see Definition~\ref{def:comp_lambda_ex}.

\begin{corollary}
\label{cor:rec_setting}
Let $\lambda > 0$ and, for $\bftheta \in \Sphere$, let $p_\bftheta \in \Band_\rho^\lambda$ be of compact $\lambda$-exceedance with parameter $\rho > 0$.
Moreover, let $p_\bftheta[k] = p_\bftheta((k-K)T)$ for $k = 0,\ldots,N$ with $N = K + K'$. 
Assume that
\begin{equation*}
\T < \frac{\pi}{\Omega}, \quad
K \geq \rho\T^{-1}, \quad
K' \geq \frac{\pi \rho \T^{-1} + (K+1) \Omega \T}{\pi - \Omega \T}.
\end{equation*}
Then, steps 2--6 in Algorithm~\ref{alg:omp-fbp} exactly recover $p_{\bftheta}[k]$ from $p_\bftheta^\lambda[k]$ for $k ={ 0,\dots,N}$ with at most $N-2N_\Omega-1$ iterations in Algorithm~\ref{alg:omp} if $\bfV$ in step~4 of Algorithm~\ref{alg:omp} is replaced by its restriction $\bfV_\S$ to ${\S =\{0,\dots,N-2(N_\Omega-1)\}}$.
\end{corollary}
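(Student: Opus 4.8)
The plan is to combine three ingredients that are already available in the excerpt — the support structure forced by compact $\lambda$-exceedance (Definition~\ref{def:comp_lambda_ex}), the injectivity criterion for reduced Vandermonde matrices (Lemma~\ref{lemma:V_one-to-one}), and the exact OMP recovery guarantee (Theorem~\ref{thm:omp_rec}) — and then push the recovered differences through the anti-difference step. Concretely, steps~2--6 of Algorithm~\ref{alg:omp-fbp} succeed once (a) the true spike support $\Ell_\lambda$ lies inside the index range the restricted algorithm searches over, (b) every reduced Vandermonde matrix $\bfV_{\S^{(P)}}$ encountered stays injective, and (c) the left-most sample is already unfolded, $p_\bftheta[0] = p_\bftheta^\lambda[0]$, so that the anti-difference operator $\Sum$ reconstructs $p_\bftheta$ from $\Delta p_\bftheta$.

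First I would translate compact $\lambda$-exceedance into a support bound on $\Ell_\lambda$. Since $p_\bftheta \in \Band_\rho^\lambda$ gives $|p_\bftheta(t)| < \lambda$ for $|t| > \rho$, the residual $s_\bftheta^\lambda = p_\bftheta - p_\bftheta^\lambda$ vanishes for $|t| > \rho$; in terms of the sample index $k$ at position $(k-K)\T$ this confines $\supp(s_\bftheta^\lambda)$ and, after differencing, $\Ell_\lambda = \supp(\underline{s}_\bftheta^\lambda)$ to indices with $|(k-K)\T| \le \rho$, so that $\Ell_\lambda \subseteq \{K - \rho\T^{-1} - 1, \dots, K + \rho\T^{-1}\}$ up to an off-by-one from the difference operator. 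The hypothesis $K \geq \rho\T^{-1}$ then gives (c) directly: the sample at position $-K\T$ satisfies $|-K\T| \ge \rho$, hence $s_\bftheta^\lambda[0] = 0$ and $p_\bftheta[0] = p_\bftheta^\lambda[0]$.

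The main quantitative step, and the one I expect to be the chief obstacle, is to show that the $K'$-hypothesis pushes the right endpoint $\max \Ell_\lambda \le K + \rho\T^{-1}$ below the search range $\max\S$ of the restricted OMP, while simultaneously keeping $\max\S$ strictly below $J := N - 2N_\Omega - 1$ so that Lemma~\ref{lemma:V_one-to-one} applies to all $\S^{(P)} \subseteq \S$. Here I would substitute $N = K + K'$ and $N_\Omega = \lceil \Omega/\omega_0 \rceil$ with $\omega_0 = 2\pi/((N+1)\T)$, so that $2N_\Omega \approx \Omega(N+1)\T/\pi$, and solve the resulting inequality $K + \rho\T^{-1} \le N - 2N_\Omega - O(1)$ for $K'$; clearing the factor $(\pi - \Omega\T) > 0$ (positive precisely because $\T < \pi/\Omega$) is what produces the stated bound $K' \ge (\pi\rho\T^{-1} + (K+1)\Omega\T)/(\pi - \Omega\T)$. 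The delicate point is that the ceiling in $N_\Omega$, the differencing off-by-one in $\supp(\underline{s}_\bftheta^\lambda)$, and the precise endpoint of $\S$ all contribute additive constants that must line up so that $\Ell_\lambda \subseteq \S$ and $\max\S < J$ hold at once; I would therefore verify the endpoint of $\S$ against both requirements explicitly rather than treat the constants loosely. Note that the restriction to $\S$ is essential exactly for (b): it prevents greedy OMP from ever selecting a column of index $\ge J$, which could otherwise destroy injectivity.

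Finally I would assemble the pieces. With $\Ell_\lambda \subseteq \S$ and $\max\S < J$, Lemma~\ref{lemma:V_one-to-one} makes every reduced matrix $\bfV_{\S^{(P)}}$ injective and identifies $\bfc^\dagger$ (supported in $\Ell_\lambda \subseteq \S$) as the unique support-in-$\S$ solution of $\bfV\bfc = \bfs$; the argument of Theorem~\ref{thm:omp_rec} then forces the least-squares update in step~4 to return exactly $\bfc^\dagger$, so the restricted Algorithm~\ref{alg:omp} recovers $\{c_\ell, t_\ell\}_{\ell \in \Ell_\lambda}$. Since $\bfV_\S$ has only $J = N - 2N_\Omega - 1$ rows, the selected columns remain linearly independent for at most $J$ iterations before the residual is annihilated, yielding the claimed iteration count. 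Steps~3--5 then reconstruct $\widehat{\underline{s}}_\bftheta^\lambda$, $\widehat{\underline{p}}_\bftheta$ and $\underline{p}_\bftheta = \Delta p_\bftheta$ exactly, and step~6 applies $\Sum$ together with $p_\bftheta[0] = p_\bftheta^\lambda[0]$ from (c) to recover $p_\bftheta[k]$ for all $k = 0,\dots,N$.
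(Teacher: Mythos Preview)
Your plan is correct and mirrors the paper's own argument: use compact $\lambda$-exceedance together with the $K'$-hypothesis to force $\Ell_\lambda \subseteq \S$, observe that the restriction to $\S$ keeps the column indices small enough for Lemma~\ref{lemma:V_one-to-one}/Theorem~\ref{thm:omp_rec} to fire, and finish with the anti-difference using $p_\bftheta[0]=p_\bftheta^\lambda[0]$ from $K \ge \rho\T^{-1}$. The paper compresses the first step into the single inequality $(K'-2(N_\Omega-1))\T \ge \rho$ (which, after substituting $N_\Omega \approx \Omega(N+1)\T/(2\pi)$ and $N=K+K'$, is exactly the displayed $K'$-bound) and then invokes Theorem~\ref{thm:omp_rec} directly; your more explicit bookkeeping of the ceiling and difference-operator off-by-ones is a reasonable elaboration of the same computation.
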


\begin{proof}
Due to the assumptions $p_\bftheta$ satisfies
\begin{equation*}
|p_\bftheta(t)| \leq \lambda
\quad \forall \, |t| > (K'-2(N_\Omega-1)) \T.
\end{equation*}
Therefore, $p_\bftheta[k] = p_\bftheta^\lambda[k]$ for all $k > N-2(N_\Omega-1)$ and, hence, $\max_{\ell \in \Ell_\lambda} \ell \leq N-2(N_\Omega-1)$.
After at most $P = N-2N_\Omega-1$ iterations we have ${\Ell_\lambda \subseteq \S^{(P)} \subseteq \S}$.
Moreover, by construction we have $\max_{m \in \S^{(p)}} \vert m \vert < N-2N_\Omega-1$.
Thus, Theorem~\ref{thm:omp_rec} implies the exact recovery of $p_\bftheta[k]$ for $k = 0,\ldots,N$.
\end{proof}

Note that by defining the oversampling factor $\OF = \frac{\T_{\text{S}}}{\T}$ with Nyquist rate $\T_{\text{S}} = \frac{\pi}{\Omega}$, the sufficient condition in Corollary~\ref{cor:rec_setting} can be rewritten as
\begin{equation*}
\OF > 1, \quad
K \geq \rho\T^{-1}, \quad
K' \geq \frac{\rho \T^{-1} + (K+1) \OF^{-1}}{1 - \OF^{-1}}.
\end{equation*}
As opposed to this, the standard setting $K' = K$ leads to the condition
\begin{equation*}
\OF > 2
\quad \mbox{ and } \quad
K \geq \frac{\rho \T^{-1} + \OF^{-1}}{1 - 2\OF^{-1}}.
\end{equation*}
Consequently, asymmetric radial samples around the origin allow for a smaller sampling rate at the cost of more samples.

\subsection{Direct Fourier Reconstruction Approach}
\label{subsec:DFR}

The above explained \ompfbp method is a mixture of both a Fourier domain and spatial domain reconstruction approach.
In contrast to this, we now propose a recovery scheme that operates solely in Fourier domain.
The first step of this novel reconstruction algorithm follows the above \ompfbp approach until the domain shift from the frequency domain to the spatial domain is applied.
Here, however, we circumvent the domain shift by utilizing a differentiation property of the discrete Fourier transform, which builds an interface between the approach in~\cite{Beckmann2022a} and the class of direct Fourier inversion methods for the conventional Radon transform, which we now explain in more details.

\subsubsection*{New Second Step}

For $\bfx \in \R^2$, the polar coordinate representation of the inverse Fourier transform is given by
\begin{equation}
\label{eq:2d_finv}
f(\bfx) = \frac{1}{(2\pi)^2}\int_{-\pi}^{\pi} \int_{0}^{\infty}   \sigma \,  \Fourier_2 f(\sigma \bftheta(\varphi)) \, \e^{\i \sigma \bfx \cdot \bftheta(\varphi)} \: \d \sigma \, \d \varphi.
\end{equation}
The class of direct Fourier reconstruction~(DFR) approaches is now characterized by performing the Fourier inversion in~\eqref{eq:2d_finv} while only using the given Radon measurements, substituting
\begin{equation}
\label{eq:FST}
\Fourier_2 f(\sigma\bftheta) = \Fourier_1(\Radon_\bftheta f)(\sigma)
\quad \forall \, (\sigma,\bftheta) \in \R \times \Sphere,
\end{equation} 
which holds according to the Fourier slice theorem. 

This simple Radon inversion strategy is in contrast to the above described FBP method that uses a univariate Fourier inversion step, followed by the application of the back projection operator in spatial domain.
Since the DFR approaches operate in the frequency domain, we can almost directly adapt them to the first step of our \ompfbp reconstruction procedure.
Our goal is to utilize the reconstructed frequency information of the Radon data from the first step for the application of the DFR approach without explicitly computing $p_\bftheta$.
The fusion of both steps results in a Fourier-based reconstruction approach of the original data from modulo Radon samples.

In the first step of the \ompfbp approach we compute $\widehat{\underline{p}}_\bftheta$ and, thus, to apply the DFR approach we need to transition from $\widehat{\underline{p}}_\bftheta$ to $\widehat{p}_\bftheta$.
As the discrete Fourier transform and the forward difference operator are non-commutative, we now formulate and prove a relation between a signal and its forward differences in the Fourier domain.
This relation is captured by the following {\em discrete differentiation property~(DDP)}.

\begin{proposition}(Discrete differentiation property~(DDP))
\label{thm:ddp}
For $z = (z[0],\ldots,z[N]) \in \C^{N+1}$ let $\widetilde{z} \in \C^N$ denote its reduced discrete Fourier transform, for $n = 0,\dots,N-1$ defined as
\begin{equation*}
\widetilde{z}[n] = \sum\nolimits_{k = 0}^{N-1} z[k] \, \e^{\nicefrac{-2\pi \i n k}{N}}.
\end{equation*}
Then, for all $n = 0,\dots,N-1$ we have
\begin{equation*}
\widehat{\underline{z}}[n] = \Bigl(\e^{\nicefrac{2\pi \i n}{N}}-1 \Bigr) \widetilde{z}[n]-\e^{\nicefrac{2\pi \i n}{N}} (z[0]-z[N]).
\end{equation*}
\end{proposition}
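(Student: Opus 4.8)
The plan is to prove the identity by a direct computation from the definition of $\widehat{\underline{z}}[n]$, treating it as a discrete analogue of integration by parts (Abel summation) in the Fourier domain. First I would substitute the definition of the forward difference, $\underline{z}[k] = \Delta z[k] = z[k+1] - z[k]$, into the defining sum
\[
\widehat{\underline{z}}[n] = \sum_{k=0}^{N-1} \bigl(z[k+1] - z[k]\bigr) \, \e^{\nicefrac{-2\pi \i n k}{N}}
\]
and split it into two separate sums, one carrying $z[k+1]$ and one carrying $z[k]$. The second of these is immediately recognized as $\widetilde{z}[n]$.

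The key step is re-indexing the first sum. Setting $j = k+1$ shifts the summation range to $j = 1,\ldots,N$ and produces a global factor $\e^{\nicefrac{2\pi \i n}{N}}$ after pulling the phase $\e^{\nicefrac{-2\pi \i n (j-1)}{N}}$ apart. I would then reconcile the range $j = 1,\ldots,N$ with the range $j = 0,\ldots,N-1$ defining $\widetilde{z}[n]$ by adding and subtracting the two endpoint terms: one removes the $j=0$ term $z[0]$ and adds the $j=N$ term $z[N]\,\e^{\nicefrac{-2\pi \i n N}{N}}$. Here the crucial simplification is the periodicity relation $\e^{\nicefrac{-2\pi \i n N}{N}} = \e^{-2\pi \i n} = 1$ for integer $n$, which collapses the $j=N$ phase to unity and is what makes the boundary correction take the clean form $z[N]-z[0]$.

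Putting the two sums back together yields
\[
\widehat{\underline{z}}[n] = \e^{\nicefrac{2\pi \i n}{N}}\bigl(\widetilde{z}[n] - z[0] + z[N]\bigr) - \widetilde{z}[n],
\]
and collecting the coefficient of $\widetilde{z}[n]$ produces the factor $\e^{\nicefrac{2\pi \i n}{N}} - 1$, while the remaining boundary contribution becomes $-\e^{\nicefrac{2\pi \i n}{N}}(z[0]-z[N])$, which is exactly the claimed identity.

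I do not expect a genuine obstacle here: the result is an elementary summation-by-parts relation, and the only point requiring care is the bookkeeping of the two endpoint terms at $j=0$ and $j=N$ together with the observation that the reduced discrete Fourier transform $\widetilde{z}$ deliberately omits the last sample $z[N]$. This asymmetry between the length-$(N+1)$ vector $z$ and its length-$N$ transform is precisely what generates the boundary term $z[0]-z[N]$; the harder conceptual work — namely why this particular relation is the right interface to the direct Fourier reconstruction approach — lies outside the proof itself.
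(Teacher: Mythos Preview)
Your proposal is correct and follows essentially the same approach as the paper: both proofs substitute the forward difference into the defining sum, split into two sums, re-index the first via $j=k+1$ (the paper uses $m$), exploit $\e^{-2\pi\i n} = 1$ to handle the endpoint, and collect terms. Your write-up is in fact slightly more explicit about the periodicity step and the role of the omitted sample $z[N]$, but the argument is the same direct computation.
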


\begin{proof}
We use the following direct calculations
\begin{align*}
\widehat{\underline{z}}[n] &= \sum\nolimits_{k=0}^{N-1} \bigl(z[k+1]-z[k]\bigr) \, \e^{\nicefrac{-2 \pi \i n k}{N}} \\
&= \sum\nolimits_{k=0}^{N-1} z[k+1] \, \e^{\nicefrac{-2 \pi \i n k}{N}} - \sum\nolimits_{k=0}^{N-1} z[k] \, \e^{\nicefrac{-2 \pi \i n k}{N}}
\end{align*}
so that
\begin{align*}
\widehat{\underline{z}}[n] &= \e^{\nicefrac{2\pi \i n}{N}} \sum\nolimits_{m=1}^N z[m] \e^{-\nicefrac{2\pi \i n m}{N}} - \widetilde{z}[n] \\
&= \left( \e^{\nicefrac{2 \pi \i n}{N}}-1\right) \widetilde{z}[n] -\e^{\nicefrac{2\pi \i n}{N}} (z[0]- z[N]),
\end{align*}
which completes the proof.
\end{proof}

Applying Proposition~\ref{thm:ddp} to our setting, for $N = K + K'$ and
\begin{equation}
\label{eq:reddft_p}
\widetilde{p}_\bftheta[n] = \sum\nolimits_{k= 0}^{N-1} p_\bftheta [k] \e^{-\i \frac{2\pi n k}{N}}
\quad \text{for }  n = 0,\ldots,N-1
\end{equation}
we have 
\begin{align}
\widehat{\underline{p}}_{\bftheta}[n] &= (\e^{\i \underline{\omega}_0 n}-1) \widetilde{p}_\bftheta[n]- \e^{\i\underline{\omega}_0 n} \left(p_\bftheta[0]- p_\bftheta[N]\right)
\nonumber \\
&= (\e^{\i \underline{\omega}_0  n}-1) \widetilde{p}_\bftheta[n] - \e^{\i\underline{\omega}_0  n} \left(p_\bftheta^\lambda[0]-p_\bftheta^\lambda[N]\right),
\label{eq:ddp_radon1}
\end{align}
where the last equality holds if no folds occur in the first and last sample of the signal $p_\bftheta$.
For compactly supported target functions $f$ this can be ensured if the sampling range is chosen sufficiently large, which can be made precise by considering functions of compact $\lambda$-exceedance, cf. Definition~\ref{def:comp_lambda_ex}.

Note that by rearranging~\eqref{eq:ddp_radon1}, we can determine $\widetilde{p}_\bftheta[n]$ from $\widehat{\underline{p}}_{\bftheta}[n]$ for all $n = 1,\hdots,N-1$.
However, this cannot be applied for $n = 0$ according to the vanishing factor $(\e^{\i \underline{\omega}_0 n}-1)$ in this case.
Alternatively, we apply the {\em mean value property} 
\begin{equation}
\label{eq:mvp1}
\widetilde{p}_\bftheta[0] = \sum\nolimits_{k=0}^{N-1}p_\bftheta[k] = \sum\nolimits_{k = 0}^{N-1} (s_\bftheta^\lambda [k] + p_\bftheta^\lambda[k]),
\end{equation}
where instead of explicitly computing the residual samples $s_\bftheta^\lambda[k]$ for $k = 0,\ldots,N-1$ we exploit the previously obtained parameter set $\{c_\ell\}_{\ell \in \Ell_\lambda}$ from OMP and rewrite~\eqref{eq:mvp1} as
\begin{equation}
\label{eq:mvp2}
\widetilde{p}_\bftheta[0] = \sum\nolimits_{k = 0}^{N-1} \Bigl(\sum\nolimits_{\ell < k} c_\ell + p_\bftheta^\lambda[k]\Bigr).
\end{equation}

\begin{algorithm}[t]
\setstretch{1.1}
\caption{\ompdfr Method}
\label{alg:omp-dfr} 
\begin{algorithmic}[1]
\Require MRT samples $p_{\bftheta_m}^\lambda[k] = p_{\bftheta_m}^\lambda((k-K)\T)$ for $k \in [0,N]$ and $m \in [0,M-1]$, bandwidth $\Omega > 0$, threshold $\varepsilon > 0$
\medskip
\For{$\bftheta \in \{\bftheta_m \}_{m=0}^{M-1}$}
\smallskip
\State Set $\underline{p}_\bftheta^\lambda[n] = \Delta p_\bftheta^\lambda[n]$ and compute $\widehat{\underline{p}}_\bftheta^\lambda[n]$.
\State Estimate $\{c_\ell, t_\ell \}_{\ell \in \Ell_\lambda}$ by applying Algorithm~\ref{alg:omp}.
\State Compute $\widehat{\underline{s}}_\bftheta^\lambda[n] = \sum_{\ell \in \Ell_\lambda} c_\ell \exp \left( -\i \frac{\omega_0 n}{\T} t_\ell \right)$.
\State Set $\widehat{\underline{p}}_{\bftheta}[n] = \widehat{\underline{p}}_{\bftheta}^\lambda [n]+ \widehat{\underline{s}}_{\bftheta}^\lambda [n]$.
\State Apply DDP from~\eqref{eq:ddp_radon1} and~\eqref{eq:mvp2} to obtain $\widetilde{p}_\bftheta[n]$
\State Set $\widehat{f}_\Omega[n\bftheta] = \widetilde{p}_\bftheta[n]$.
\smallskip
\EndFor
\smallskip
\State Compute $f_\DFR$ from $\widehat{f}_\Omega$ using a chosen DFR method.
\medskip
\Ensure \ompdfr reconstruction $f_\DFR$.
\end{algorithmic}
\end{algorithm}

Combining the above described first step and this new second reconstruction step fully circumvents the explicit computation of Radon projections $p_\bftheta$ in spatial domain and, hence, leads to a direct Fourier reconstruction scheme for the modulo Radon transform, called \ompdfr method, which is summarized in Algorithm~\ref{alg:omp-dfr}.
Altogether, the approximate reconstruction $f_\DFR$ is obtained by applying the DDP to the recovered signal $\widehat{\underline{p}}_\bftheta$ from the first step and subsequently using the obtained $\widetilde{p}_\bftheta$ as input data for the selected DFR approach.
The reconstruction scheme uses the modulo Radon projections $p_{\bftheta_m}^\lambda[k]$ for $k = 0,\dots,K+K'$ and $m = 0,\dots,M-1$ at a predefined modulo threshold $\lambda$ and the OMP stopping criterion ${\lVert\bfV^\ast(\bfs - \bfV \bfc^{(i-1)})\rVert \leq \varepsilon}$.
For illustration, the flow diagrams of both the \ompfbp method and the \ompdfr method are depicted in Fig.~\ref{fig:diagrams}.
In the following, we propose a suitable DFR approach for our modulo reconstruction framework. 

\begin{figure}[t]
\centering
\includegraphics[width=1\linewidth]{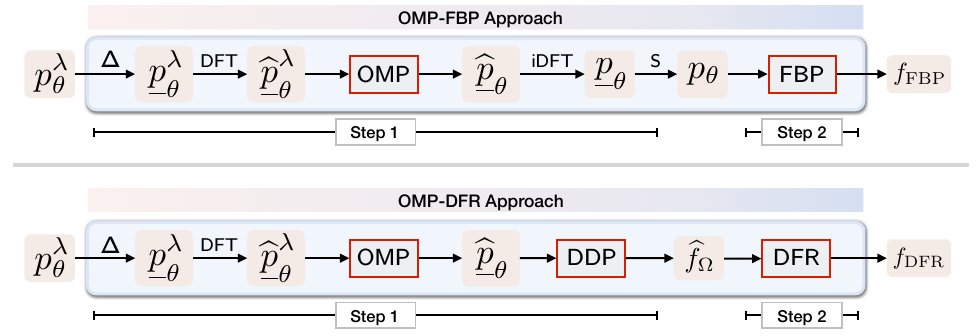}
\caption{Juxtaposition of \ompfbp method (top) and \ompdfr method (bottom).}
\label{fig:diagrams}
\end{figure}

\subsection{Overview of Direct Fourier Reconstruction Methods}
\label{subsec:DFR_Radon}

A major challenge of the DFR approaches lies in the discretization of the two-dimensional Fourier inversion of a function, captured in~\eqref{eq:2d_finv}.
Due to the relation given by~\eqref{eq:FST}, sampling from Radon data with the same strategy as in~\eqref{eq:discr_data} yields the samples of $\Fourier_2 f$ on a polar coordinate grid.
However, to simply apply the common inverse discrete Fourier transform~(IDFT), we require equidistant samples on a Cartesian coordinate grid.
The most intuitive transition between the two grids is a direct interpolation between both coordinate grids.
However, as stated for example in~\cite{Natterer2001}, this may produce severe artifacts in the reconstruction. 

More sophisticated DFR methods improve reconstruction quality by adapting the sampling strategies.
From existing methods, it is possible to distinguish between interpolating and non-interpolating algorithms as well as categorizing them into adapted sampling in the frequency domain and adapted sampling in the spatial domain.

The linogram method, proposed in~\cite{Natterer2001}, is an interpolating method which uses adapted sampling in the frequency domain in order to obtain frequency samples on concentric squares located around the origin, i.e., on a \emph{linogram}.
Another linogram approach from~\cite{Edholm1991} uses sampling points on the linogram in the spatial domain of the Radon transform.
This avoids any kind of interpolation in the frequency domain. 

The gridding method in~\cite{Schomberg1995} uses convolutions with a suitable window function rather than interpolations to approximate the samples of $\Fourier_2 f$ on a Cartesian grid.
A further method is introduced in~\cite{Potts2001}.
It classifies as a non-interpolating DFR method with adapted sampling in Fourier domain and is based on an efficient computation of the inverse discrete Fourier transform by utilizing a suitable algorithm for non-equispaced samples in Fourier domain.

In this work, we focus on the approach proposed in~\cite{Potts2001}, since it is suitable to our framework and has shown reasonable reconstruction quality, as e.g. captured in Section~\ref{sec:numerics}.
In this case, formula~\eqref{eq:2d_finv} is approximated by utilizing non-equispaced discrete Fourier inversion, that is efficiently implemented by the \emph{non-equispaced fast Fourier transform~(NFFT)} algorithm. 
Accordingly, we refer to this as the NFFT-Radon approach and choosing this reconstruction method in Algorithm~\ref{alg:omp-dfr} as DFR method leads to the \ompnfft algorithm.
In the following, we briefly explain the discretization steps in~\cite{Potts2001}.

\smallskip

Let $f_\Omega$ be the pre-filtered version of $f$ with bandwidth~$\Omega$ due to the pre-filtering steps in Section~\ref{sec:MRT}.
We can write the bivariate Fourier inversion formula~\eqref{eq:2d_finv} as
\begin{equation}
\label{eq:2dFT}
f_\Omega(\bfx) = \frac{1}{8\pi^2} \int_{-\pi}^{\pi} \int_{-\infty}^{\infty} \vert \sigma \vert \Fourier_2 f_\Omega(\sigma \bftheta(\varphi)) e^{\i \sigma \bfx \cdot \bftheta(\varphi)} \: \d \sigma \: \d \varphi
\end{equation}
and set 
$h(\sigma) =  \vert \sigma \vert   \Fourier_2 f_\Omega (\sigma \bftheta(\varphi)) e^{\i \sigma(\bfx \cdot \bftheta(\varphi))}$ for $\sigma \in \R$.
Then, Poisson's summation formula yields 
\begin{equation*}
\Fourier_1 h (0) + \sum\nolimits_{n \in \Z \setminus \{0\}} \Fourier_1 h(2n)
= \pi \sum\nolimits_{k = -K}^{K-1} h(\pi k),
\end{equation*} 
where the truncation of the series on the right hand side is valid if the sampling condition $K > \frac{\Omega}{\pi}$ is satisfied.

Assuming that $h$ is essentially bandlimited with sufficiently small bandwidth $\Omega_h \ll 2$, a result from~\cite{Natterer2001a} states that
\begin{equation*}
\sum\nolimits_{n \in \Z \setminus \{0\}} \Fourier_1 h(2 n)
\approx -\frac{\pi^2}{6} \Fourier_2 f_\Omega(0).
\end{equation*}
With this, the previous steps give the approximation 
\begin{equation*}
\begin{split}
& \int_{-\infty}^{\infty} \vert \sigma \vert \Fourier_2 f_\Omega (\sigma \bftheta(\varphi))\e^{\i \sigma (\bfx \cdot \bftheta(\varphi))} \: \d\sigma \\
& \enspace \approx \pi^2 \sum\nolimits_{k = -K}^{K-1} \vert k \vert \Fourier_2 f_\Omega (\pi k \bftheta(\varphi))\e^{\i \pi k(\bfx \cdot \bftheta(\varphi))} + \frac{\pi^2}{6} \Fourier_2 f_\Omega(0)
\end{split}
\end{equation*}
of the inner integral in~\eqref{eq:2dFT}. The outer integral can be discretized by applying the trapezoidal rule with step size $\Delta \varphi = \frac{\pi}{M}$, yielding the approximation
\begin{equation}
\label{eq:disc}
f_\Omega(\bfx) \approx \frac{\pi}{8M} \sum_{m = - M}^{M-1} \sum_{n = -K}^{K-1} \nu_n \Fourier_2 f_\Omega(\pi \bftheta_m n) \e^{\i \pi n \bfx \cdot \bftheta_m}, 
\end{equation} 
where
\begin{equation*}
\nu_n = \begin{cases}
\frac{1}{12} & \text{for } n = 0, \\
n  & \text{otherwise.}
\end{cases}
\end{equation*}

Together with the Fourier slice theorem, the approximation
\begin{equation*}
\Fourier_2 f_\Omega (\pi \bftheta(\varphi_m) n) = \Fourier_1 (p_{\bftheta_m})(\pi n) \approx \T \, \widecheck{p}_{\bftheta_m}[n]
\end{equation*}
for $n \in -K,\hdots,K-1$ is used in~\cite{Potts2001}, where $\widecheck{p}_\bftheta[n]$ is obtained via
\begin{equation*}
\widecheck{p}_\bftheta[n] = \sum\nolimits_{k=-K}^{K-1} p_{\bftheta} (k\T) \, \e^{-\i\pi n k /K}
\end{equation*}
and we set $K' = K$, which can be generalized.

Following the steps of Algorithm~\ref{alg:omp-dfr}, $\widetilde{p}_\bftheta[n]$ is computed as in~\eqref{eq:reddft_p} for $n = 0,\hdots, 2K$ before applying the chosen DFR approach. According to former index conventions, we cannot use $\widecheck{p}_\bftheta$ and $\widetilde{p}_\bftheta$ interchangeably for the computation of~\eqref{eq:disc}, although both versions of the DFT are based on the same set of Radon samples. However, the relation 
\begin{equation*}
\widecheck{p}_\bftheta[n] = (-1)^n \widetilde{p}_\bftheta[n]
\end{equation*}
can be derived, which can be applied in order to compute the Fourier inversion given in~\eqref{eq:disc} within the previous setting of Algorithm~\ref{alg:omp-dfr}. For $n = -K,\hdots,-1$ we use $\widetilde{p}_\bftheta[n] = \widetilde{p}_\bftheta[n+2K]$ to avoid additional computations. 

Since~\eqref{eq:disc} requires a large number of algorithmic operations, the authors of~\cite{Potts2001} provide a suitable algorithm for a more efficient computation.
The proposed \emph{non-equispaced fast Fourier transform~(NFFT)} works for samples that are not necessarily equispaced in time or frequency domain, where~\eqref{eq:disc} corresponds to the latter case. The algorithm is of the same computational complexity as FFT.   

Similar to the FBP method, in~\eqref{eq:disc} we can apply a low-pass reconstruction filter with even window $W \in \Lebesgue^\infty(\R)$ satisfying $\supp(W) \subseteq [-1,1]$ in order to filter the high frequency components, leading to the approximate reconstruction formula
\begin{equation*}
f_\NFFT(\bfx) = \frac{\pi \T}{8M} \sum_{m = - M}^{M-1} \sum_{n = -K}^{K-1} \nu_n W \Big(\frac{\pi n}{N_\Omega}\Big) \widecheck{p}_{\bftheta_m}[n]\e^{\i \pi n \bfx \cdot \bftheta_m}.
\end{equation*}

To close this section, we want to comment on the varying data sizes in Algorithms~\ref{alg:omp-fbp} and~\ref{alg:omp-dfr} during the reconstruction process.
By applying the DDP in step 6 of Algorithm~\ref{alg:omp-dfr}, we can compute $2K$ samples of $\widetilde{p}_\bftheta$, which perfectly suits the setting of the NFFT-based reconstruction scheme.
In contrast to this, we can recover $2K+1$ samples of $p_\bftheta$ in Algorithm~\ref{alg:omp-fbp} by applying by the anti-difference operator, which in turn suits the setting of the FBP reconstruction scheme.

\subsection{Runtime Complexity}
\label{subsec:complexity}

Despite initial drawbacks of direct Fourier inversion methods in terms of reconstruction quality, this approach has been of particular interest in the past due to its beneficial runtime complexity.
Compared to the FBP, the improved complexity is a consequence of embedding the 2D discrete Fourier transform, which can be efficiently implemented by the well-known bivariate fast Fourier transform~(FFT) algorithm.
It has a runtime complexity of $\O(R^2 \log R)$ for signals of size $R \times R$.
Based on the embedding, a majority of direct Fourier inversion methods result in this total order of computational complexity from an asymptotic perspective.
This is in contrast to the filtered back projection algorithm, which has a runtime of $\O(M R^2 + M K \log K)$ with angular sample size $M$, radial sample size $K$ and a quadratic reconstruction grid of size $R \times R$~(cf.~\cite{Natterer2001,Natterer2001a}). 

This advantageous behaviour of DFR methods can be transferred when comparing the \ompfbp method to the class of \ompdfr methods.
Since the latter uses the computationally more efficient Fourier inversion strategy and directly adapts to a majority of former steps from the \ompfbp algorithm, we find an improvement of computational costs for our proposed algorithm.
As proposed in Section~\ref{subsec:DFR}, for demonstration purposes we choose the DFR approach from~\cite{Potts2001} for our further comparison.
Note that other approaches of the same type are provided with comparable computational costs by the design of the direct Fourier inversion in formula~\eqref{eq:2d_finv}.

The first four steps in Algorithm~\ref{alg:omp-fbp} and Algorithm~\ref{alg:omp-dfr} coincide and include the univariate FFT algorithm, the OMP algorithm given in Algorithm~\ref{alg:omp} as well as a sparse matrix multiplication.
The FFT algorithm in step 2 has a computational complexity of $\O(K \log K)$.
Regarding the OMP algorithm, we see from Algorithm~\ref{alg:omp} that the operations of higher computational costs are given by the index choice in step~2 and the least squares minimization in step~4.
In~\cite{Sturm2012} it is shown that the efficiency of these steps can be improved by recursively adapting the residual $\bfs - \bfV \bfc$ and applying a QR-decomposition to the reduced-column version of $\bfV$ at each iteration.
This results in a total complexity of $\O(Ki + i^2)$ for the $i$-th iteration step of the OMP algorithm.
Assuming that the algorithm requires $P$ iteration steps, we arrive at $\O(KP^2 + P^3)$ operations.
The subsequent sparse matrix multiplication has a complexity of $\O(P K)$.
Since all operations are performed for each angle, steps~1--7 of Algorithm~\ref{alg:omp-fbp} have a total complexity of ${\O(M K \log K + MKP^2 + MP^3)}$.
Consequently, the subsequent application of the FBP algorithm leads to an overall complexity of ${\O(M K \log K + MKP^2 + MP^3 + MR^2)}$ for \ompfbp, where we assume that FBP uses an interpolation scheme with computational cost of order $\O(1)$.

As opposed to this, the NFFT-Radon algorithm itself has a computational complexity of $\O(R^2 \log R + R^2)$.
Therefore, the \ompnfft algorithm results in a computational complexity of ${\O(M K \log K + MKP^2 + MP^3 + R^2 \log R + R^2)}$.
Thus, assuming a sufficiently large number of angular samples to achieve a satisfactory reconstruction quality we observe that \ompnfft yields a significant reduction of the total computational complexity compared to \ompfbp.

To explain this further, note that the sampling sizes are often chosen as $M = \O(K)$ and $R = \O(K)$, which is due to the optimal sampling conditions for the Radon transform (cf.~\cite{Natterer2001,Natterer2001a}).
In this setting, the \ompfbp results in a computational complexity of the order ${\O(K^2\log K +K^2P^2+KP^3+K^3)}$ compared to ${\O(K^2\log K +K^2P^2+KP^3+K^2)}$ for the \ompnfft approach. For a fixed number of $P$ iterations in the OMP algorithm, we obtain $\O(K^3)$ as a dominating term for \ompfbp compared to ${\O(K^2\log K)}$ for \ompnfft, which emphasizes the reduction of computational costs by nearly one order of magnitude from an asymptotic perspective.

Finally, note that both algorithms include further computational steps as, for example, the forward difference and anti-difference operator as well as the DDP.
All these operations are of complexity $\O(MK)$ and, therefore, do not act as a dominating term.
For the sake of brevity these have thus been neglected in the considerations of this section.

\section{Numerical and Hardware Experiments}
\label{sec:numerics}

We now present numerical and hardware experiments to demonstrate our approach for both simulated and real data.
To this end, we use the Shepp-Logan phantom~\cite{Shepp1974}, depicted in Fig.~\ref{fig:scheme} (left) along with its sinogram (top right), and the radially symmetric Bull's Eye phantom~\cite{Iske2018} in Fig.~\ref{fig:bullseye}(a), whose sinogram is constant in the angular variable, see Fig.~\ref{fig:bullseye}(b).
We also consider the open source walnut dataset~\cite{Siltanen2015} that includes realistic uncertainties arising from the tomography hardware.

\begin{figure*}[!t]
\centering
\includegraphics[width=\linewidth]{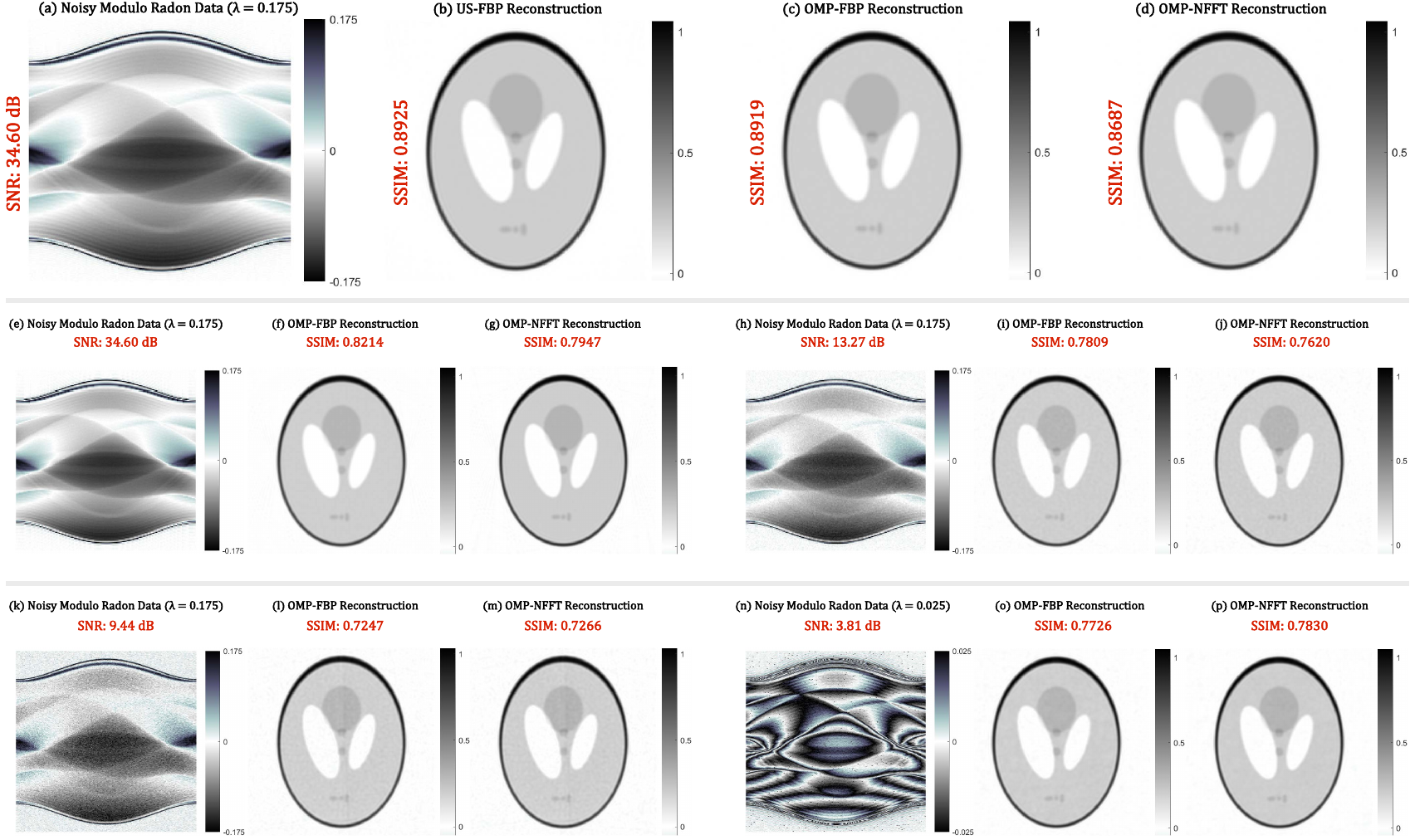}
\caption{Numerical experiments with the Shepp-Logan phantom and various experiment parameters.
(a)~Noisy modulo Radon data with $\lambda = 0.175$ and $\nu = 0.01 \cdot \lambda$. (b)~\usfbp on~(a). (c)~\ompfbp on~(a). (d)~\ompnfft on~(a).
(e)~Noisy modulo Radon data with $\lambda = 0.175$ and $\nu = 0.01 \cdot \lambda$. (f)~\ompfbp on~(e). (g)~\ompnfft on~(e).
(h)~Noisy modulo Radon data with $\lambda = 0.175$, $\sigma = 0.025 \cdot \overline{p}_\bftheta$ and $\nu = 0.025 \cdot \lambda$. (i)~\ompfbp on~(h). (j)~\ompnfft on~(h).
(k)~Noisy modulo Radon data with $\lambda = 0.175$, $\sigma = 0.08 \cdot \overline{p}_\bftheta$ and $\nu = 0.1 \cdot \lambda$. (l)~\ompfbp on~(k). (m)~\ompnfft on~(k).
(n)~Noisy modulo Radon data with $\lambda = 0.025$ and shot noise in $[-0.2,0.2]$. (o)~\ompfbp on~(n). (p)~\ompnfft on~(n).}
\label{fig:experiment_sl}
\end{figure*}

\subsection{Numerical Experiments}

\subsubsection*{Shepp-Logan phantom}
In this section, we present reconstruction results for the Shepp-Logan phantom on a grid of $512 \times 512$ pixels from noisy modulo Radon projections
\begin{equation*}
\{\tilde{p}_{\bftheta_m}^{\lambda}(k\T) \mid -K \leq k \leq K, ~ 0 \leq m \leq M-1\},
\end{equation*}
where we use the optimal parameter choices $\T = \nicefrac{1}{K}$, $\Omega = M$.
Moreover, we set $M = 180$ and use the cosine reconstruction filter with the window function $W(S) = \cos(\frac{\pi S}{2}) \, \ind_{[-1,1]}(S)$.

We start with comparing our proposed OMP-based reconstruction techniques, the \ompfbp and \ompnfft algorithms, with the current state-of-the art method, the \usfbp algorithm from~\cite{Beckmann2022}.
For measuring the reconstruction quality we make use of the structural similarity index measure ($\SSIM$) proposed in~\cite{Bovik2004}.
The results for $\lambda = 0.175$ are summarized in Fig.~\ref{fig:experiment_sl}(a)-(d), where we use the parameter $K = 171$ and uniform noise with noise level $\nu = 0.01 \cdot \lambda$ on the modulo samples, i.e., $\|\tilde{p}_\bftheta^\lambda - p_\bftheta^\lambda\|_\infty \leq \nu$, leading to a signal-to-noise ratio (SNR) of $34.60~\dB$.
This choice leads to an oversampling factor $\OF \approx 3$ and, hence, it violates the recovery condition $\T \leq \frac{1}{\Omega e}$ for \usfbp.
Nevertheless, all three methods succeed and yield a reconstruction of visually the same quality.
More exactly, \usfbp and \ompfbp yield nearly the same $\SSIM$ of $0.89$, whereas \ompnfft leads to a slightly smaller $\SSIM$ of $0.87$.
Note that this is of the same quality as FBP reconstruction from clear Radon data, in which case we have $\SSIM = 0.8957$.

In contrast to this, reducing the oversampling factor to $\OF \approx 1.5$ by choosing $K = 85$ makes \usfbp reconstruction fail.
The OMP-based reconstructions, however, successfully recover with visually the same quality, see Fig.~\ref{fig:experiment_sl}(e)-(g), where we obtain an $\SSIM$ of $0.8214$ for \ompfbp and an $\SSIM$ of $0.7947$ for \ompnfft.
This shows that OMP allows for smaller oversampling as compared to previous approaches and is in conformity with our theoretical findings in Section~\ref{subsec:rec_guarantee}.

\begin{table*}[!t]
\caption{Comparison of MRT reconstruction algorithms in terms of type, recovery condition and runtime complexity.}
\centering
\resizebox{0.75\textwidth}{!}{%
\centering
\renewcommand{\arraystretch}{1.3}
\begin{tabular}{c|cccccc}
\toprule[1.5pt]
\multirow{2}{*}{Method} & \multirow{2}{*}{Type} & Recovery & Runtime & \multicolumn{3}{c}{Latency numbers [sec] for experiment in Fig~\ref{fig:experiment_sl}(a)-(d)} \\
& & condition & complexity & $R=512$  & $R=1024$ & $R=2048$ \\
\midrule
\usfbp & spatial & $\T \leq \frac{1}{\Omega e}$ & $\O(K^3)$ & $1.29$ (FBP: $0.65$) & $2.13$ (FBP: $1.07$) & $4.94$ (FBP: $2.48$) \\
\ompfbp & hybrid & $\T < \frac{\pi}{\Omega}$ & $\O(K^3)$ & $1.24$ (FBP: $0.65$) & $1.67$ (FBP: $1.07$) & $3.06$ (FBP: $2.48$) \\
\ompnfft & frequency & $\T < \frac{\pi}{\Omega}$ & ${\O(K^2\log K)}$ & $0.67$ (NFFT: $0.07$) & $0.76$ (NFFT: $0.14$) & $0.99$ (NFFT: $0.39$) \\[0.5ex]
\bottomrule[1.5pt]
\end{tabular}
}
\label{tab:shepp-logan_phantom}
\end{table*}

\begin{figure}[!t]
\centering
\includegraphics[height=.356\linewidth]{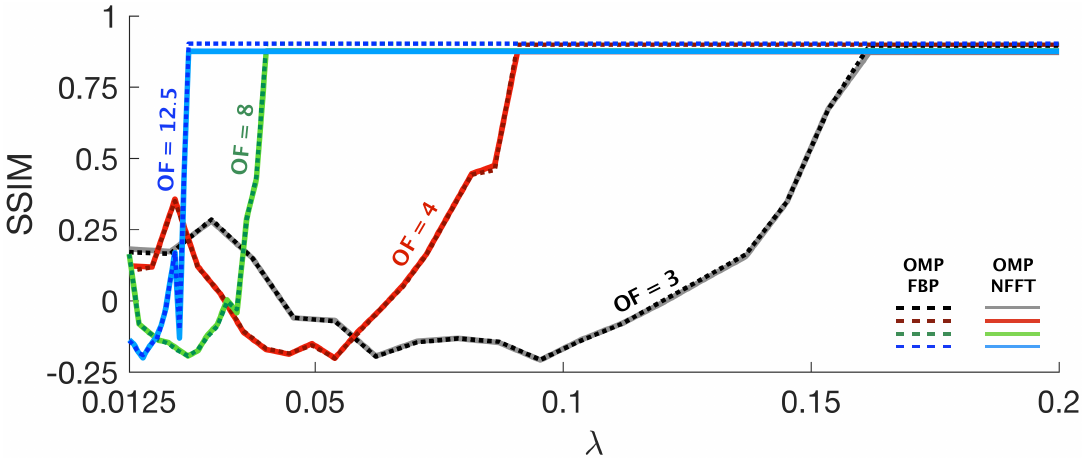}
\caption{Investigation of reconstruction quality for decreasing $\lambda$ and fixed $\OF$.}
\label{fig:lambda_sweep}
\end{figure}

To show the robustness of our approach with respect to noise, in Fig.~\ref{fig:experiment_sl}(h)-(j) we use the parameter $K = 100$, leading to an oversampling factor of $\OF \approx 1.75$, and consider a combination of Gaussian noise before and uniform noise after modulo with threshold $\lambda = 0.175$.
More precisely, we added white Gaussian noise with variance $\sigma^2$ to the Radon projections $p_\bftheta$, where the standard deviation $\sigma = 0.025 \cdot \overline{p}_\bftheta$ depends on the arithmetic mean $\overline{p}_\bftheta$ of $p_\bftheta$.
Moreover, we added uniform noise with noise level $\nu = 0.025 \cdot \lambda$ to the modulo projections, yielding the noisy projections $\tilde{p}_\bftheta^\lambda$ with an SNR of $13.27~\dB$.
While \usfbp fails to recover, we observe that \ompfbp and \ompnfft are able to reconstruct the phantom from this noisy data with again comparable quality.
In numbers, we obtain $\SSIM = 0.7809$ for \ompfbp and $\SSIM = 0.7620$ for \ompnfft.

We observe that the difference in $\SSIM$ between \ompfbp and \ompnfft decreases with an increasing amount of noise.
To demonstrate this, in Fig~\ref{fig:experiment_sl}(k)-(m) we added white Gaussian noise with standard deviation $\sigma = 0.08 \cdot \overline{p}_\bftheta$ to the Radon projections $p_\bftheta$ as well as uniform noise with noise level $\nu = 0.1 \cdot \lambda$ to the modulo Radon projections.
As more noise requires a denser sampling for successful recovery, we use $K = 712$ leading to noisy modulo Radon projections $\tilde{p}_\bftheta^\lambda$ with an SNR of $9.44~\dB$.
In this case, \ompfbp yields an $\SSIM$ of $0.7247$, whereas \ompnfft leads to a slightly larger $\SSIM$ of $0.7266$.

To simulate the observation that real hardware modulo samples can be contaminated with sparse outliers, we consider the case of shot noise in Fig.~\ref{fig:experiment_sl}(n)-(p).
For each direction $\bftheta \in \{\bftheta_j \}_{j=0}^{M-1}$ we added random values in the range $[-0.2,0.2]$ at up to $30$ positions to noisy modulo Radon projections with threshold $\lambda = 0.025$, uniform modulo noise level $\nu = 0.1 \cdot \lambda$ and parameter $K = 821$, leading to the noisy modulo Radon projections $\tilde{p}_\bftheta^\lambda$ with an SNR of $3.81~\dB$.
While \usfbp fails, both OMP-based approaches successfully recover.
Again, \ompnfft yields a larger $\SSIM$ of $0.7830$ as \ompfbp, which leads to an $\SSIM$ of $0.7726$.

Although our OMP-based method is agnostic to $\lambda$, understandably, lower values of $\lambda$ tend to push the numerical methods to their limits. Specifically, in our case, smaller~$\lambda$ invokes higher number of folds worsening the condition number of the reduced Vandermonde matrix. To quantify this effect, we conducted  experiments where we fix oversampling factor, considering noiseless samples. The results are depicted in Fig.~\ref{fig:lambda_sweep}, where we plot the $\SSIM$ of the reconstruction as a function of $\lambda$. While \usfbp succeeds for all choices of $\OF$, we observe that \ompfbp and \ompnfft indeed break down for smaller~$\lambda$. However, the recovery success improves with increasing~$\OF$. The tradeoff between $\lambda$ and $\OF$ remains a compelling research pursuit encouraging more in-depth investigations in the future.

Next, we turn our attention to the noisy scenario considering (i) uniform, (ii) Gaussian, and (iii) mixed noise (combination of (i) and (ii)) while fixing $\OF$ with varying levels of noise. The results are shown in Fig.~\ref{fig:noise_sweep}, where we use $K = 712$ and plot the $\SSIM$ of the reconstruction as a function of SNR (in $\dB$) of the noise, averaged over $10$ runs. In case of uniform noise, we added bounded noise with noise level $\nu \in [0.009,0.2]\lambda$ to the modulo samples, while, in the case of Gaussian noise, we added white Gaussian noise with standard deviation $\sigma \in [0.0012, 0.1]\overline{p}_\bftheta$ to the Radon projections before the modulo operation. In all cases, we observe that \usfbp breaks down for SNR smaller than $20~\dB$, while \ompfbp and \ompnfft still yield decent reconstructions even for noise with very low SNR of $6.5~\dB$. Moreover, \ompnfft seems to perform slightly better in case of more aggressive noise.

\begin{figure}[!t]
\centering
\includegraphics[height=.356\linewidth]{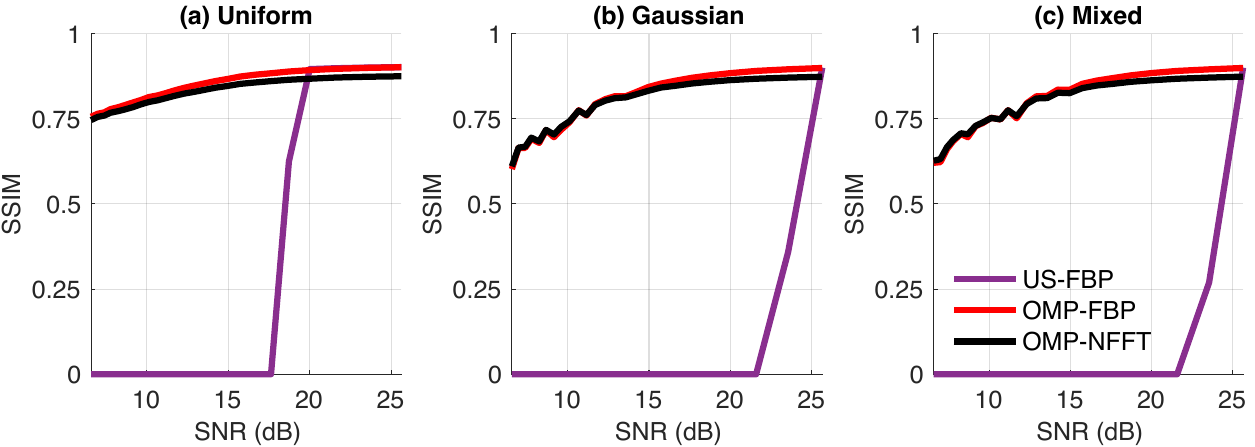}
\caption{Investigation of reconstruction quality for noise with decreasing SNR.}
\label{fig:noise_sweep}
\end{figure}

In conclusion, we observe that our proposed OMP-based reconstruction approaches allow for a smaller oversampling factor compared to the current state-of-the-art \usfbp algorithm, where
\ompfbp and \ompnfft yield comparable reconstruction qualities.
However, \ompnfft is observed to perform better in the present of more severe noise and outperforms \ompfbp in terms of computational complexity, as predicted by the findings in Section~\ref{subsec:complexity} and summarized in Table~\ref{tab:shepp-logan_phantom}.

\begin{figure*}[!t]
\centering
\includegraphics[width=1\linewidth]{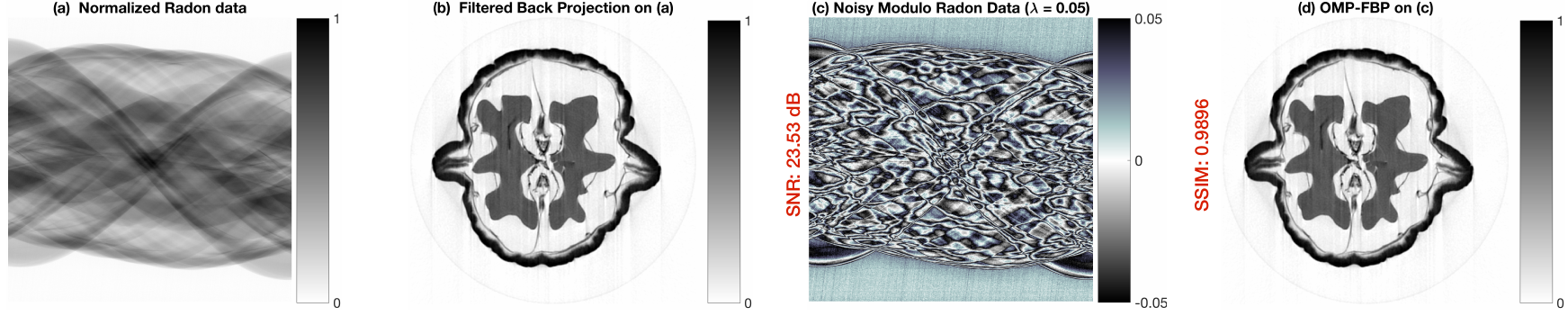}
\caption{Walnut dataset. (a)~Normalized Radon data. (b)~FBP on~(a). (c)~Noisy modulo Radon data with $\lambda = 0.05$ and $\nu = 0.05 \cdot \lambda$. (d)~\ompfbp on~(c).}
\label{fig:walnut_data}
\end{figure*}

\begin{figure}[t]
\centering
\includegraphics[height=.356\linewidth]{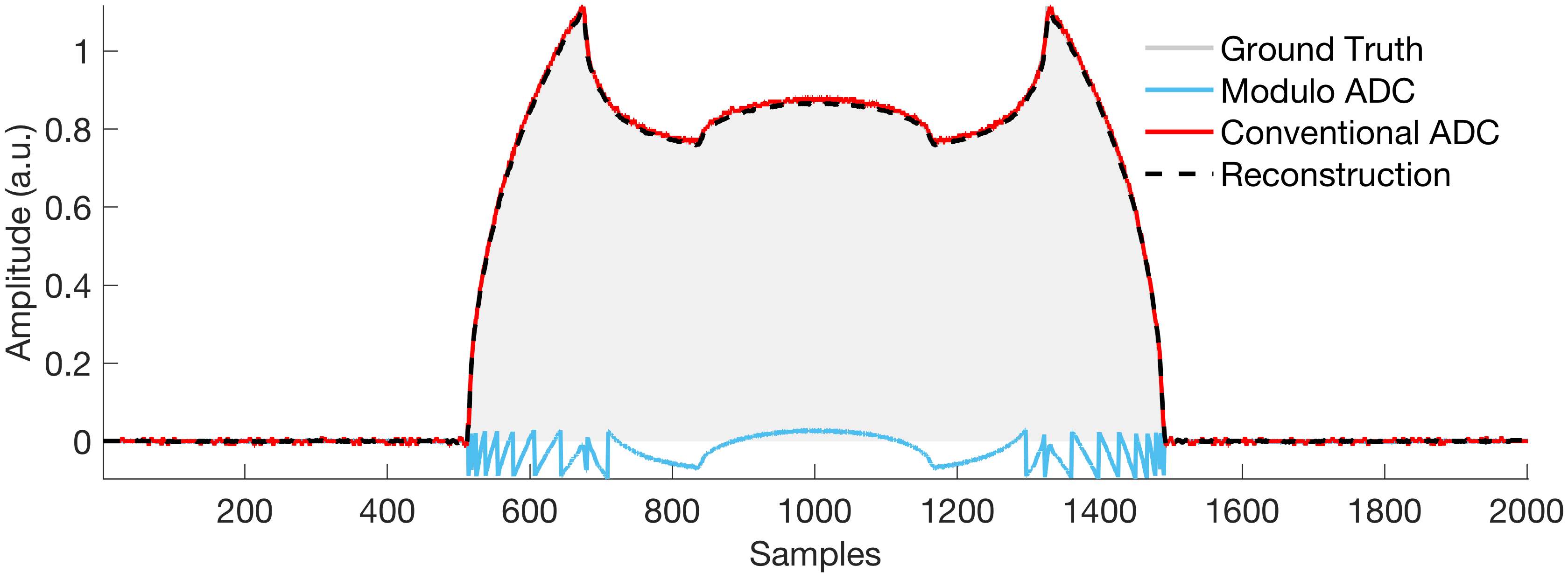}
\caption{Hardware measurements and recovery of Bull's Eye phantom.}
\label{fig:bullseye_line}
\end{figure}

\subsubsection*{Walnut data}
In this section, we present reconstruction results for the walnut dataset from~\cite{Siltanen2015}, which is transformed to parallel beam geometry with $M = 600$ and $K = 1128$ corresponding to $\T = \frac{1}{1128}$.
Moreover, the Radon data is normalized to the dynamical range $[0,1]$, see Fig.~\ref{fig:walnut_data}(a).
The corresponding FBP reconstruction on a grid on $512 \times 512$ pixels is shown in Fig.~\ref{fig:walnut_data}(b), where we used $\Omega = M = 600$ and the cosine reconstruction filter.
Simulated modulo Radon projections with $\lambda = 0.05$ are displayed in Fig.~\ref{fig:walnut_data}(c), where we added uniform noise with noise level $\nu = 0.05 \cdot \lambda$ to the modulo samples to account for quantization errors leading to noisy modulo Radon projections $\tilde{p}_\bftheta^\lambda$ with an SNR of $23.53~\dB$.
The reconstruction with \ompfbp is shown in Fig.~\ref{fig:walnut_data}(d), where we again used $\Omega = M = 600$ and the cosine filter.
We observe that our proposed algorithm yields a reconstruction of the walnut that is indistinguishable from the FBP reconstruction from conventional Radon data, cf.~Fig.~\ref{fig:walnut_data}(b), with an $\SSIM$ of $0.9896$, while compressing the dynamic range by $10$ times.

\subsection{Hardware Validation}
In this section, we describe the pipeline for hardware experiments based on our prototype modulo ADC (\madc); the schematic is shown in \fig{fig:MS}. For a given bit-budget (or digital resolution), we will compare our reconstruction with measurements based on a conventional ADC.

The starting point of our experiment is the Bull's Eye phantom~\cite{Iske2018} in \fig{fig:bullseye}(a), for which we compute the Radon transform analytically. This serves as a numerical benchmark for the hardware data (both via \madc and conventional ADC).
To this end, the Radon transform is sampled and numerically filtered with ideal low-pass filter $\Phi_\Omega$ resulting in $(\Radon_\bftheta f \ast \Phi_\Omega)[n]$, which is subsequently normalized to $[0,1]$. This serves as input to the pipeline depicted in \fig{fig:MS}.
Let us stress that the phantom is compactly supported and, hence, the phantom along with its Radon transform are not bandlimited, as illustrated in \fig{fig:bullseye}(d). Instead, our modified sampling architecture enforces bandlimitedness of the measurements due to pre-filtering with $\Phi_\Omega$.

We then convert the samples to continuous-time representation using a Digital-to-Analog converter (DAC) (\texttt{TTi TG5011}). The output of the DAC is fed to the \madc \cite{Bhandari:2021:J,Beckmann2022} and digitized using an oscilloscope (\texttt{DSO-X 3024A}) resulting in $p_{\bftheta}^\lambda [n]$. Simultaneously, we also record conventional samples mimicking $p_{\bftheta}[n] = (\Radon_\bftheta f \ast \Phi_\Omega)[n]$ via \texttt{DSO-X 3024A}. For both modulo and conventional samples, the measurement resolution is $\sim 6.4$ bits. The respective samples are shown in \fig{fig:bullseye_line}.

Note that the hardware pipeline introduces unknown gains which need to be calibrated and we do so with the knowledge of ground truth data (gray curve in \fig{fig:bullseye_line}). After post-calibration, conventional ADC samples and ground truth match up to ADC resolution ($\sim 6.4$ bits) and we apply steps 2--6 in Algorithm~\ref{alg:omp-fbp} to reconstruct a version of $p_{\bftheta}[n] = (\Radon_\bftheta f \ast \Phi_\Omega)[n]$ given \madc samples (black curve in \fig{fig:bullseye_line}).

\begin{figure}[t]
\centering
\includegraphics[height=.356\linewidth]{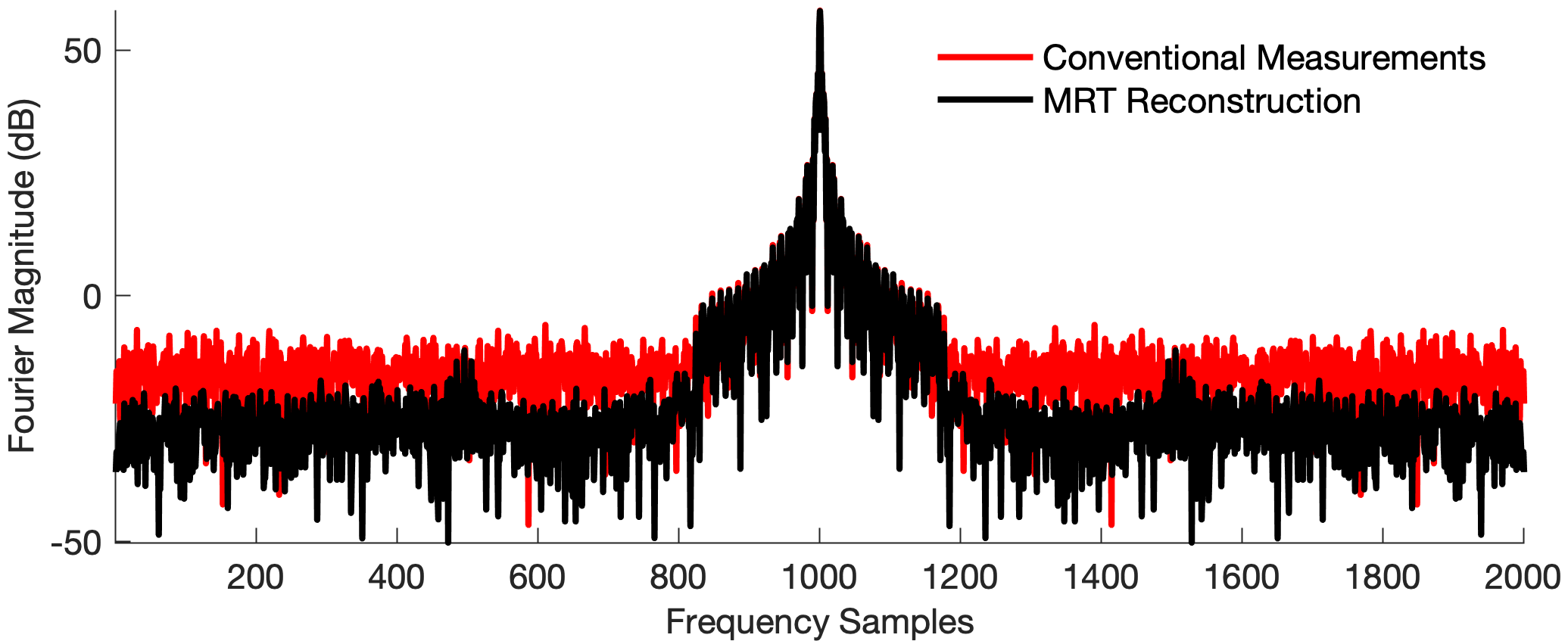}
\caption{Fourier spectrum of data in Fig.~\ref{fig:bullseye_line}.}
\label{fig:FigFT}
\end{figure}

\begin{figure}[!t]
\begin{center}
     \scalebox{0.725}{\tikzset{every picture/.style={line width=0.75pt}} 

\begin{tikzpicture}[x=0.75pt,y=0.75pt,yscale=-1,xscale=1]

\draw   (60,50) -- (140,50) -- (140,90) -- (60,90) -- cycle ;
\draw   (200,60) .. controls (193,60) and (193,51) .. (195,51) .. controls (197,51) and (197,60) .. (190,60) .. controls (183,60) and (183,51) .. (185,51) .. controls (187,51) and (187,60) .. (180,60) .. controls (173,60) and (173,51) .. (175,51) .. controls (177,51) and (177,60) .. (170,60) ;
\draw    (58,70) -- (40,70) -- (40,35) ;
\draw [shift={(60,70)}, rotate = 180] [color={rgb, 255:red, 0; green, 0; blue, 0 }  ][line width=0.75]    (10.93,-3.29) .. controls (6.95,-1.4) and (3.31,-0.3) .. (0,0) .. controls (3.31,0.3) and (6.95,1.4) .. (10.93,3.29)   ;
\draw   (310,50) -- (380,50) -- (380,90) -- (310,90) -- cycle ;
\draw    (170,80) -- (308.33,80) ;
\draw [shift={(310.33,80)}, rotate = 180] [color={rgb, 255:red, 0; green, 0; blue, 0 }  ][line width=0.75]    (10.93,-3.29) .. controls (6.95,-1.4) and (3.31,-0.3) .. (0,0) .. controls (3.31,0.3) and (6.95,1.4) .. (10.93,3.29)   ;
\draw    (210,60) -- (308.33,60) ;
\draw [shift={(310.33,60)}, rotate = 180] [color={rgb, 255:red, 0; green, 0; blue, 0 }  ][line width=0.75]    (10.93,-3.29) .. controls (6.95,-1.4) and (3.31,-0.3) .. (0,0) .. controls (3.31,0.3) and (6.95,1.4) .. (10.93,3.29)   ;
\draw    (140,70) -- (160,70) -- (170,60) ;
\draw    (160,70) -- (170,80) ;
\draw    (200,60) -- (208,60) ;
\draw [shift={(210,60)}, rotate = 180] [color={rgb, 255:red, 0; green, 0; blue, 0 }  ][line width=0.75]    (10.93,-3.29) .. controls (6.95,-1.4) and (3.31,-0.3) .. (0,0) .. controls (3.31,0.3) and (6.95,1.4) .. (10.93,3.29)   ;
\draw    (150,70) -- (150,35) ;
\draw [shift={(150,70)}, rotate = 270] [color={rgb, 255:red, 0; green, 0; blue, 0 }  ][fill={rgb, 255:red, 0; green, 0; blue, 0 }  ][line width=0.75]      (0, 0) circle [x radius= 3.35, y radius= 3.35]   ;
\draw    (290,60) -- (290,35) ;
\draw [shift={(290,60)}, rotate = 270] [color={rgb, 255:red, 0; green, 0; blue, 0 }  ][fill={rgb, 255:red, 0; green, 0; blue, 0 }  ][line width=0.75]      (0, 0) circle [x radius= 3.35, y radius= 3.35]   ;
\draw    (380,60) -- (413,60) ;
\draw [shift={(415,60)}, rotate = 180] [color={rgb, 255:red, 0; green, 0; blue, 0 }  ][line width=0.75]    (10.93,-3.29) .. controls (6.95,-1.4) and (3.31,-0.3) .. (0,0) .. controls (3.31,0.3) and (6.95,1.4) .. (10.93,3.29)   ;
\draw    (380,80) -- (413,80) ;
\draw [shift={(415,80)}, rotate = 180] [color={rgb, 255:red, 0; green, 0; blue, 0 }  ][line width=0.75]    (10.93,-3.29) .. controls (6.95,-1.4) and (3.31,-0.3) .. (0,0) .. controls (3.31,0.3) and (6.95,1.4) .. (10.93,3.29)   ;

\draw (100,70) node  [font=\small] [align=left] {\begin{minipage}[lt]{45.85pt}\setlength\topsep{0pt}
\begin{center}
{\texttt{TTi TG5011}}\\{\texttt{(DAC)}}
\end{center}

\end{minipage}};
\draw (345,70) node  [font=\small] [align=left] {\begin{minipage}[lt]{31.79pt}\setlength\topsep{0pt}
\begin{center}
\sf{DSO-X}\\\sf{3024A}
\end{center}

\end{minipage}};
\draw (153,20.5) node  [font=\large]  {$p_{\bftheta }( t)$};
\draw (437,58.5) node  [font=\normalsize]  {$p_{\bftheta }^{\lambda }[ n]$};
\draw (437,78.5) node  [font=\normalsize]  {$p_{\bftheta }[ n]$};
\draw (59,26) node  [font=\small] [align=left] {\sf{Radon Samples}};
\draw  [draw opacity=0]  (265.5,5.5) -- (315.5,5.5) -- (315.5,40.5) -- (265.5,40.5) -- cycle  ;
\draw (290.5,23) node  [font=\large] [align=left] {$p_{\bftheta }^{\lambda }( t)$};
\draw  [color={rgb, 255:red, 255; green, 0; blue, 0 }  ,draw opacity=1 ][fill={rgb, 255:red, 255; green, 255; blue, 255 }  ,fill opacity=1 ]  (212.38,47) -- (273.38,47) -- (273.38,72) -- (212.38,72) -- cycle (209.38,44) -- (276.38,44) -- (276.38,75) -- (209.38,75) -- cycle ;
\draw (242.88,59.5) node   [align=left] { \ \madc \ };

\end{tikzpicture}}  

\end{center}
\caption{Pipeline for hardware experiments with prototype \madc.}
\label{fig:MS}
\end{figure}
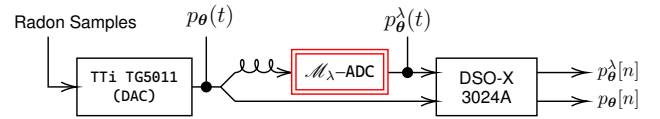

As shown in \fig{fig:FigFT}, {\bf reconstructed data from \madc output yields a lowered quantization noise floor} ($\sim 12~\dB$). This is because for a given bit-budget (here $\sim 6.4$ bits), the MRT measurements enjoy a higher digital resolution, which translates to a higher sensitivity in the MRT samples.

Using the radial symmetry of the Bull's Eye phantom, we can construct its full RT sinogram in \fig{fig:bullseye}(b) as well as its full MRT sinogram in \fig{fig:bullseye}(c) by repeating the conventional ADC samples and the \madc measurements, respectively.
The conventional FBP reconstruction from RT data is shown in \fig{fig:bullseye}(e) and yields an $\SSIM$ of $0.8888$. In contrast to this, \ompfbp based on MRT data leads to an improved $\SSIM$ of $0.9142$, see \fig{fig:bullseye}(f).
The NFFT reconstruction from RT data is shown in \fig{fig:bullseye}(g) with an $\SSIM$ of $0.8741$, whereas \ompnfft leads to $\SSIM = 0.9003$, see \fig{fig:bullseye}(h).

As before, an inspection in Fourier domain, cf. \fig{fig:bullseye}(i)-(l), reveals that the reconstructions from \madc measurements demonstrate higher sensitivity due to lower quantization noise floor resulting in reduced ring artifacts in the reconstructions; \ompnfft shows the best \emph{out-of-band} noise reduction.

\begin{figure*}[t]
\centering
\includegraphics[width=1\linewidth]{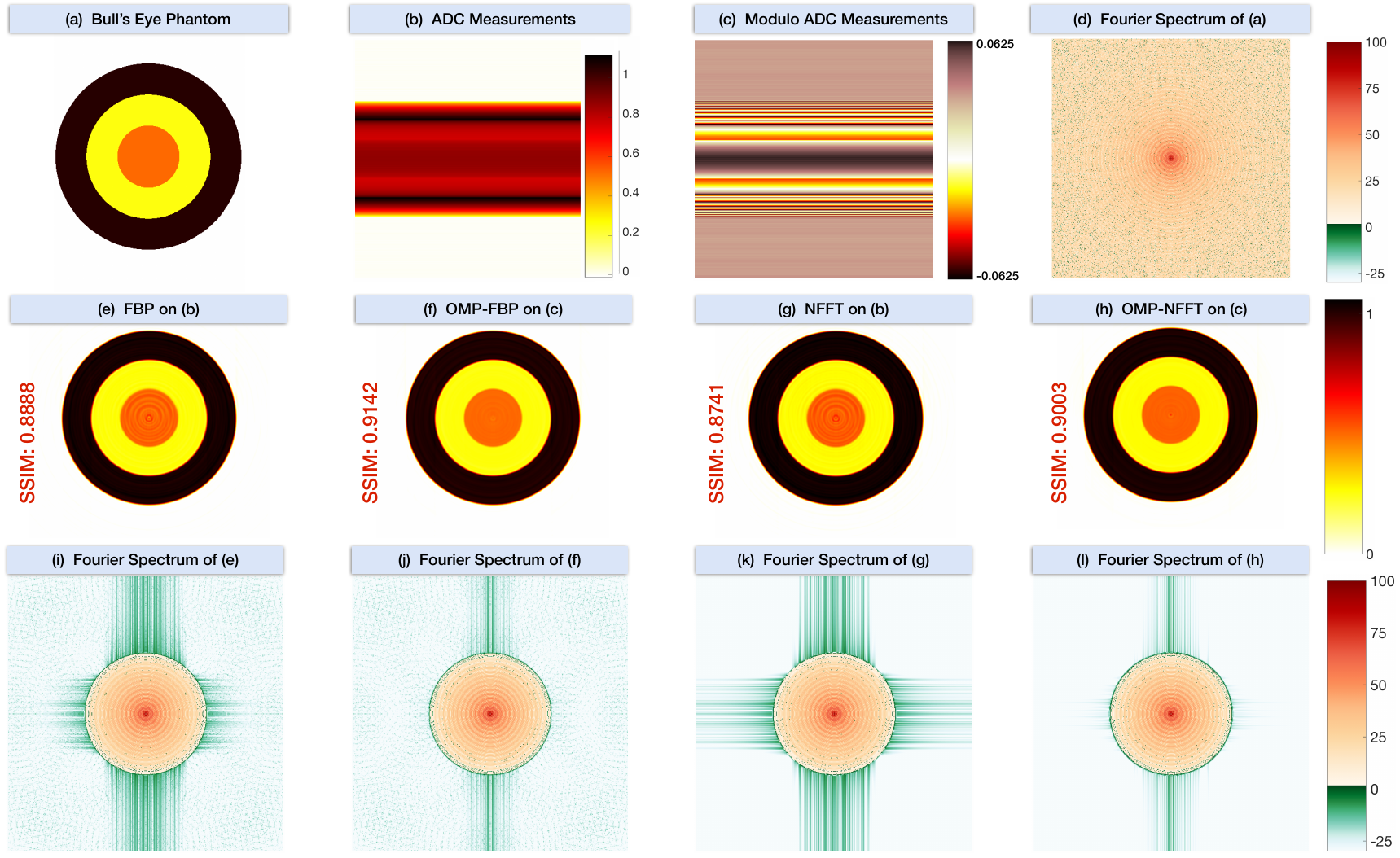}
\caption{Hardware experiment based on Bull's Eye phantom. (a)~Phantom. (b)~ADC measurements. (c)~Modulo ADC measurements. (d)~Spectrum of~(a). (e)~FBP on~(b). (f)~\ompfbp on~(c). (g)~NFFT on~(b). (h)~\ompnfft on~(c). (i)~Spectrum of~(e). (j)~Spectrum of~(f). (k)~Spectrum of~(g). (l)~Spectrum of~(h). All spectra are in $\dB$.}
\label{fig:bullseye}
\end{figure*}

\section{Conclusion}
\label{sec:conc}

The modulo Radon transform (MRT) has been recently introduced as a tool for single shot, high dynamic range tomography. This paper develops a novel recovery method for the inversion of MRT. The proposed approach is backed by mathematical guarantees and offers several advantages over existing art. In particular, it works with near critical sampling rates, it is agnostic to modulo threshold, it is computationally efficient and it is empirically stable to system noise. Both numerical and hardware experiments are used to validate the theoretical claims of this paper.

In terms of future research, key directions that are highly relevant to our setup include, 
\begin{enumerate}[leftmargin = *, label = $\bullet$]
\item theoretically analyzing the noise performance of our proposed algorithms to confirm their empirical stability,
\item leveraging advanced sparse optimization methods that are the essential ingredients of our reconstruction method, and
\item developing reconstruction strategies that can directly work with higher dimensions and tensor based imaging models, which is particularly applicable to voxel-based data.
\end{enumerate}
Beyond the algorithmic aspects, efficient hardware design implementing the MRT remains an interesting topic on its own.

\section*{Acknowledgments}
The authors thank the second reviewer for their very carefully thought through comments and constructive criticism that have greatly enhanced the presentation of our work.

\balance

\end{document}